\newcommand{\loc}{\ensuremath{\text{loc}}\xspace}
\newcommand{\n}{\ensuremath{\nabla}\xspace}
\newcommand{\ud}{\ensuremath{u_{\delta}}\xspace}
\newcommand{\N}{\ensuremath{\mathbb{N}}\xspace}
\newcommand{\R}{\ensuremath{\mathbb{R}}\xspace}
\renewcommand{\epsilon}{\varepsilon}
\begin{document}

\numberwithin{equation}{section}
\newtheoremstyle{break}{15pt}{15pt}{\itshape}{}{\bfseries}{}{\newline}{}
\theoremstyle{break}
\newtheorem*{Satz*}{Theorem}
\newtheorem*{Rem*}{Remark}
\newtheorem*{Lem*}{Lemma}
\newtheorem{Satz}{Theorem}[section]
\newtheorem{Rem}{Remark}[section]
\newtheorem{Lem}{Lemma}[section]
\newtheorem{Cor}{Corollary}[section]
\newtheorem{Prop}{Proposition}[section]
\theoremstyle{definition}

\parindent2ex

\begin{center}
{\Large \bf Existence and almost everywhere regularity of generalized minimizers for a class of variational problems with linear growth related to image inpainting}
\end{center}

\begin{center}
 J. M\"uller\footnote{The first author would like to express his thankfulness to  Joachim Weickert for providing financial support.} \& C. Tietz\footnote{Both authors want to thank Michael Bildhauer, Martin Fuchs and Joachim Weickert for  many inspiring and stimulating discussions.}
\end{center}

\noindent AMS Subject Classification: 49 Q 20, 49 J 45, 49 N 15 \\
Keywords: image inpainting, denoising of images, variational methods, TV-regularization, dual variational approach

\begin{abstract}
We continue the analysis of some modifications of the total variation image inpainting method formulated on the space $BV(\Omega)^M$ in the sense that we generalize the main results of \cite{FT} to the case that a more general data fitting term is involved. As in \cite{FT} we deal with vector-valued images, we do not impose any structure condition on our density $F$ and the dimension of the domain $\Omega$ is arbitrary. Precisely we discuss existence of generalized solutions of the corresponding variational problem and we will also pass to the associated dual variational problem for which we show unique solvability. Among other things, our results are the uniqueness of the absolutely continuous part $\nabla^a u$ of the gradient of $BV$-solutions $u$ on the entire domain $\Omega$, where outside of the damaged region $D$ we even get uniqueness of $BV$-solutions. Imposing stronger assumptions on our density $F$ and an $L^{\infty}$-condition on our partial observation $f$ we are going to prove a maximum principle for each generalized minimizer and deduce partial $C^{1,\beta}$-regularity of solutions on the entire domain $\Omega$ for all $0<\beta\leq\frac{1}{2}$.
\end{abstract}

\begin{section}{Introduction}
In this article we investigate the existence and the regularity of generalized solutions for a variational problem being related to variational and PDE-based methods that are frequently used in image recovery. Particularly we are concerned with a modified variant of the TV-regularization. Without being entitled of being complete we refer to \cite{AV, AVe, BKP, BF0, CCN, CL, CE, CSV, Ka, ROF, V} and the references quoted therein where the TV-regularization and some related models with superlinear growth have been studied extensively. Among theoretical aspects, the study partially also involved some numerical issues.\\
In our note we concentrate on a modified variant of the so-called total variation image inpainting method where our paramount intention is to generalize the results from \cite{FT} to the case of more general data fitting terms under consideration. Here we take the case of penalty terms including the general $L^p$-norm for $1\leq p<\infty$ as a model. Albeit our prior impetus is of rather theoretical nature it seems that at least in the case of pure denoising of images, one is interested in studying reasonable modifications of the so-called $TV-L^p$-regularization. If we set $p=2$ in this context we deal with the prominent Rudin-Osher-Fatemi model (see, e.g., \cite{SWT} or \cite{ROF} for the original paper) and this model seems to be predestinated in order to remove white additive Gaussian noise (see, e.g., \cite{AK}, pp. 62). Choosing $p\neq2$, the corresponding model seems to be useful for object recognition and image segmentation (see, e.g., \cite{SWT}) while in the limit case $p=1$, the $TV-L^1$-regularization seems to be rather reasonable for removing impulsive noise (see, e.g., \cite{GY}).\\
Our work concentrates on the concept of image inpainting that we briefly illustrate in the following:\\
suppose that we are given a bounded Lipschitz domain $\Omega$ in $\R^n$ with $n\geq2$ (e.g. a rectangle in the case $n=2$ or a cuboid in the case $n=3$) and an $\mathcal{L}^n$-measurable subset $D$ of $\Omega$ ($\mathcal{L}^n$ denoting Lebesgue's measure on $\R^n$) satisfying
\begin{align}
 \label{L} 0\leq\mathcal{L}^n(D)<\mathcal{L}^n(\Omega).
\end{align}
We further assume that we are given an observed image described through a measurable function $f:\Omega-D\rightarrow\R^M$ for which we require
\begin{align}
\label{f}  f\in L^{\zeta}(\Omega-D)^M,
\end{align}
where $\zeta>1$ denotes a fixed, finite number. Roughly speaking, the \grqq inpainting domain\grqq\,\, $D$ (compare \cite{BHS}) represents a certain part of this image for which image data are missing or inaccessible and our aim is to restore this missing part from the part which is known, i.e. to generate an image $u:\Omega\rightarrow\R^M$ based on the partial observation $f:\Omega-D\rightarrow\R^M$.\\
\newline
As outlined in \cite{FT} we can distinguish between various types of images depending on the dimension of the domain or of the codomain, respectively. In case $n=2, M=1$ we are concerned with a classical digital image (see, e.g., \cite{AK, LOPR}) while in the case $n=3, M=1$ we deal with three-dimensional images that are of fundamental meaning in medical imaging, e.g. computerized tomography or magnetic resonance imaging (see, e.g., \cite{LGS, LOPR} and the references quoted therein). Considering the case $M\geq2$ we are confronted with vector-valued images (e.g. coloured images where each channel (or dimension) illustrates a corresponding colour (see, e.g., \cite{BC} and the references quoted therein)).\\
The kind of image interpolation described above, at least in the case $n=2$ and $M=1$, is called \grqq inpainting\grqq\,\, or \grqq image inpainting\grqq\,\,, respectively (compare \cite{BHS,PSS, Sh}). In literature there are a lot of different techniques in order to handle the inpainting problem being of variational or non-variational as well as of local or non-local kind where without being complete we mention \cite{ACFLS,BBCS, BHS,CKS,CS1,CS2, ES, PSS, Sh} and the references quoted therein. In what follows we discuss a TV-like variational approach being of non-local type as proposed in \cite{BF1, BF2, BF3, BF5, BFT} and also in the related work \cite{BF0}. Precisely we seek minimizers of the functional
\begin{align}
 \label{J} J[w]:=\int\limits_{\Omega}{\psi(|\nabla w|)dx}+\frac{\lambda}{\zeta}\int\limits_{\Omega-D}{|w-f|^{\zeta} dx}
\end{align}
where $\lambda$ is a positive regularization parameter and $\psi$ is supposed to be a convex and increasing function with non-negative values.\\
The second term on the right-hand side of \eqref{J} measures the quality of data fitting, i.e. the deviation of the original image $u$ from the given data $f$ on $\Omega-D$ while the first term allows to incorporate some kind of apriori information of the generated image via some kind of mollification on the entire domain $\Omega$ into the minimization process.\\
In this setup, a common choice of $\psi$ is $\psi(|\nabla w|):=|\nabla w|$. This leads to the total variation inpainting model (compare \cite{ACS,PSS}). In order to study this variational problem, one has to work with functions $\Omega\rightarrow\R^M$ of bounded variation, i.e. in the space $BV(\Omega)^M$. This space covers all $L^1$-functions whose distributional gradient $\nabla w$ is represented by a tensor-valued Radon measure on $\Omega$ with finite total variation $\int\limits_{\Omega}{|\nabla w|}$ (for details, we refer to \cite{AFP} or \cite{Giu}).\\
In this paper we seize the basic idea carried out in \cite{BF0}, i.e. we replace the unpleasant TV-density $|\n w|$ by an integrand $F(\n w)$ of linear growth being strictly convex w.r.t. the tensor-valued Radon measure $\n w$ and study solvability of the corresponding variational problem
\begin{align}
\label{Ibv} 
\int\limits_{\Omega}{F(\nabla w)}+\frac{\lambda}{\zeta}\int\limits_{\Omega-D}{|w-f|^{\zeta} dx}\rightarrow\text{min}\quad\text{in}\,\,BV(\Omega)^M\cap L^{\zeta}(\Omega-D)^M.
\end{align}
In addition we will pass to the dual variational problem for which we prove unique solvability under weak assumptions. We further like to mention that in case $\zeta=2$, problem \eqref{Ibv} has been studied extensively (see \cite{BF1, BF2, BF3, BF5, BFT} for the case $n=2, M=1$, \cite{FT, T} for any dimension $n$ and arbitrary codimension $M$ as well as \cite{BF0} for the analysis of pure denoising in the special case $n=2$ and with arbitrary codimension $M$ where additional boundary data could be included).\\
\newline
At this stage we like to make a few comments concerning the space in which we discuss problem \eqref{Ibv}: on account of the continuous embedding $BV(\Omega)^M\hookrightarrow L^{\frac{n}{n-1}}(\Omega)^M$ being valid for \grqq bounded extension domains\grqq\,\,$\Omega$ (see, e.g., \cite{AFP}, Corollary 3.4, p.152) and considering numbers $\zeta>\frac{n}{n-1}$ the requirement \grqq $w\in L^{\zeta}(\Omega-D)^M$\grqq\,\,is an additional constraint. If we choose $\zeta\leq\frac{n}{n-1}$ we may discuss problem \eqref{Ibv} just in the entire space $BV(\Omega)^M$.\\
\newline
Now, let us fix our setup and state our precise hypotheses: suppose that we are given a function $F:\R^{nM}\rightarrow[0,\infty)$ being of class $C^1(\R^{nM})$ satisfying the following assumptions
\begin{align}
\label{v1} &\text{F is strictly convex}, F(0)=0,\\
\label{v2} &|DF(P)|\leq\nu_1,\\
\label{v3} &F(P)\geq\nu_2|P|-\nu_3
\end{align}
with constants $\nu_1,\nu_2>0,\nu_3\in\R$, for all $P\in\R^{nM}$. In accordance with \eqref{v2} and $F(0)=0$ we directly get
\[
 F(P)\leq\nu_1|P|
\]
for all $P\in\R^{nM}$, i.e. $F$ is of linear growth in the following sense
\begin{align}
 \label{lg} \nu_2|P|-\nu_3\leq F(P)\leq\nu_1|P|.
\end{align}
As a starting point we consider the following problem
\begin{align}
 \label{vp1} \begin{split} I[w]:=&\int\limits_{\Omega}{F(\nabla w)dx}+\frac{\lambda}{\zeta}\int\limits_{\Omega-D}{|w-f|^{\zeta} dx}\rightarrow\text{min}\\
&\text{in}\,\,W^{1,1}(\Omega)^M\cap L^{\zeta}(\Omega-D)^M.
\end{split}
\end{align} 
Since $F$ is of linear growth, the Sobolev space $W^{1,1}(\Omega)^M$ (see, e.g., \cite{Ad} for details concerning these spaces) acts as the suitable Sobolev space in order to discuss problem \eqref{vp1}. Unfortunately, for lack of reflexivity, we can not expect solvability of \eqref{vp1} in this space where we like to mention that in the case $\zeta=2$ and under rather strong assumptions we can discuss problem \eqref{vp1} in $W^{1,1}(\Omega)^M$ without passing to a relaxed variant in the space $BV(\Omega)^M$ (see \cite{BF1} for the case $n=2, M=1$ and \cite{T} for any dimension $n$ and arbitrary codimension $M$). Thus, the question arises how to deal with problem \eqref{vp1} in general. A natural and established approach is to apply the concept of convex functions of a measure (see, e.g., \cite{AG, DT} or \cite{GMS1}), i.e. we let for $w\in BV(\Omega)^M\cap L^{\zeta}(\Omega-D)^M$
\begin{align}
 \label{K} K[w]:=\int\limits_{\Omega}{F(\nabla^a w)dx}+\int\limits_{\Omega}{F^{\infty}\bigg(\frac{\nabla ^s w}{|\nabla ^s w|}\bigg)d|\nabla^s w|}+\frac{\lambda}{\zeta}\int\limits_{\Omega-D}{|w-f|^{\zeta}dx}.
\end{align}
Here, we denote for tensor-valued Radon measures $\rho$ by $\rho^a(\rho^s)$ the regular (singular) part of $\rho$ w.r.t. to Lebesgue's measure $\mathcal{L}^n$. Moreover, for a convex function $F:\R^{nM}\rightarrow[0,\infty)$, the recession function $F^{\infty}:\R^{nM}\rightarrow[0,\infty]$ of $F$ is defined by
\begin{align}
\label{recession} F^{\infty}(P):=\lim\limits_{t\rightarrow\infty}{\frac{F(tP)}{t}},\quad P\in\R^{nM}.
\end{align}
Since $F$ is (strictly) convex and of linear growth (recall \eqref{lg}) it follows that $F^{\infty}$ is well-defined.
Now, the idea is to seek minimizers of the relaxed variational problem
\begin{align}
 \label{Kv} K\rightarrow\text{min}\quad\text{in}\,\, BV(\Omega)^M\cap L^{\zeta}(\Omega-D)^M
\end{align}
and to introduce them as generalized solutions of \eqref{vp1}.\\
\newline
At this point, we will state our first theorem that proves solvability of problem \eqref{Kv}. Besides we will show uniqueness of the absolutely continuous part $\n^a u$ of the gradient of $BV$-solutions on the entire domain $\Omega$ and will additionally verify the uniqueness of $BV$-solutions outside of the damaged region $D$. In part (c) we justify that each $K$-minimizer can be introduced as a generalized minimizer of the original functional $I$ while in part (d) we show that each $K$-minimizer belongs to the set $\mathcal{M}$ of generalized minimizers of the functional $I$ from \eqref{vp1} and vice versa. In case $\zeta=2$ these results can be found in \cite{FT}, Theorem 1.1.
\begin{Satz}\label{BV}
Let us fix a real number $\zeta>1$ and further we assume the validity of \eqref{L} as well as \eqref{f}. Moreover we let $F$ satisfy \eqref{v1}--\eqref{v3}. Then it holds:
\begin{enumerate}[(a)]
 \item Problem \eqref{Kv} has at least one solution.
\item Suppose that $u$ and $\widetilde{u}$ are $K$-minimizing. Then
\[
 u=\widetilde{u}\,\,\text{a.e. on}\,\,\Omega-D\quad\text{and}\quad\nabla^au=\nabla^a\widetilde{u}\,\,\text{a.e. on}\,\,\Omega.
\]
\item It holds
\[
 \inf\limits_{W^{1,1}(\Omega)^M\cap L^{\zeta}(\Omega-D)^M}{I}=\inf\limits_{BV(\Omega)^M\cap L^{\zeta}(\Omega-D)^M}{K}.
\]
\item Let $\mathcal{M}$ denote the set of all $L^1(\Omega)^M$-cluster points of $I$-minimizing sequences from the space $W^{1,1}(\Omega)^M\cap L^{\zeta}(\Omega-D)^M$. Then $\mathcal{M}$ coincides with the set of all $K$-minimizers from the space $BV(\Omega)^M\cap L^{\zeta}(\Omega-D)^M$.
\end{enumerate}
\end{Satz}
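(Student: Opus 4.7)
For (a) I would use the direct method. Given a minimizing sequence $(u_k) \subset BV(\Omega)^M \cap L^\zeta(\Omega-D)^M$, the coercivity bound \eqref{v3} and the induced lower bound $F^\infty(P) \geq \nu_2|P|$ (read off from \eqref{lg} via \eqref{recession}) uniformly control the total variation $|\nabla u_k|(\Omega)$, while the data-fidelity term in \eqref{K} controls $\|u_k\|_{L^\zeta(\Omega-D)^M}$. A Poincaré-type inequality --- splitting $u_k$ into its mean on $\Omega-D$ (which has positive measure by \eqref{L}) plus a fluctuation controlled by $|\nabla u_k|(\Omega)$ --- then bounds $\|u_k\|_{L^1(\Omega)^M}$, so by $BV$-compactness a subsequence converges to some $u \in BV(\Omega)^M$ in $L^1$ and weakly in $L^\zeta(\Omega-D)^M$. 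Lower semicontinuity of the BV-part of $K$ (Reshetnyak/Ambrosio--Buttazzo for convex integrands of linear growth, see e.g.\ \cite{AFP}) together with the weak $L^\zeta$-lower semicontinuity of the convex data term gives $K[u] \leq \liminf_k K[u_k]$.

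For (b) I would exploit strict convexity. If $u$ and $\widetilde u$ are both $K$-minimizers then $v := (u+\widetilde u)/2$ is admissible with $K[v] \leq \tfrac{1}{2}(K[u] + K[\widetilde u]) = \inf K$. Should $u \neq \widetilde u$ on a subset of $\Omega-D$ of positive Lebesgue measure, strict convexity of $|\cdot - f|^\zeta$ (using $\zeta > 1$) yields a strict inequality in the data term alone and hence $K[v] < \inf K$, a contradiction; thus $u = \widetilde u$ a.e.\ on $\Omega-D$. Likewise, if $\nabla^a u \neq \nabla^a \widetilde u$ on a set of positive measure, then strict convexity of $F$ from \eqref{v1}, applied to $\nabla^a v = \tfrac{1}{2}(\nabla^a u + \nabla^a \widetilde u)$, produces a strict inequality in $\int_\Omega F(\nabla^a \cdot)\,dx$ that cannot be compensated by the merely convex singular or data terms, again contradicting minimality of $v$.

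For (c) the inclusion $\inf K \leq \inf I$ is trivial because $K[w] = I[w]$ whenever $w \in W^{1,1}(\Omega)^M$ (the singular part of $\nabla w$ vanishes). For the reverse, I would invoke an area-strict approximation: for any $u \in BV(\Omega)^M \cap L^\zeta(\Omega-D)^M$ one constructs a sequence $u_k \in W^{1,1}(\Omega)^M \cap L^\zeta(\Omega-D)^M$ such that $u_k \to u$ in $L^1(\Omega)^M$ and in $L^\zeta(\Omega-D)^M$, and
\[
\int_\Omega F(\nabla u_k)\,dx \;\longrightarrow\; \int_\Omega F(\nabla^a u)\,dx + \int_\Omega F^{\infty}\!\bigl(\nabla^s u/|\nabla^s u|\bigr)\,d|\nabla^s u|.
\]
Then $I[u_k] \to K[u]$, whence $\inf I \leq \inf K$. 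I expect this approximation to be the main obstacle: it has to be built (e.g.\ by a Meyers--Serrin mollification combined with a diagonal argument along the lines of \cite{AFP} and the variants used in \cite{BF1,FT}) in such a way that it simultaneously recovers the recession-function term of $K[u]$ and preserves $L^\zeta$-integrability of $u_k - f$ on $\Omega-D$, which is not automatic near $\partial(\Omega-D)\cap\Omega$.

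Part (d) would then follow by combining the previous steps. If $u \in \mathcal{M}$ is the $L^1$-limit of an $I$-minimizing sequence $(u_k) \subset W^{1,1}(\Omega)^M \cap L^\zeta(\Omega-D)^M$, the same a priori bounds as in (a) place $u$ in $BV(\Omega)^M \cap L^\zeta(\Omega-D)^M$, and the lower semicontinuity used there, together with (c), gives
\[
K[u] \;\leq\; \liminf_k K[u_k] \;=\; \liminf_k I[u_k] \;=\; \inf I \;=\; \inf K,
\]
so $u$ is a $K$-minimizer. Conversely, any $K$-minimizer $u$ is approached in $L^1$ by the sequence from (c), along which $I[u_k] \to K[u] = \inf I$; hence $(u_k)$ is $I$-minimizing and $u \in \mathcal{M}$.
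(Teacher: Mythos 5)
Your plan is correct and follows essentially the same route as the paper, which proves Theorem 1.1 by the argument of \cite{FT} (direct method with Reshetnyak lower semicontinuity for (a), strict convexity of $F$ and of $|\cdot-f|^{\zeta}$ for (b), and an approximation argument for (c), (d)). The crux you single out in (c) --- a smooth approximation converging in $L^1(\Omega)^M$ and in $L^{\zeta}(\Omega-D)^M$ and area-strictly, so that Reshetnyak's continuity theorem yields $I[u_k]\to K[u]$ --- is exactly the content of Lemma \ref{BV_Approx}, obtained there by truncation, mollification and a diagonal argument, just as you propose.
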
 

\begin{Rem}
Following the lines of the proof of Theorem 1.1 in \cite{FT} for the case $\zeta=2$ it becomes evident that Theorem \ref{BV} extends to any finite $\zeta>1$. Here it should be emphasized that in case $\zeta>\frac{n}{n-1}$ we have to make use of the density result for functions of $BV$-type that has been formulated in \cite{FT} (see Lemma 2.2 in this reference). For the reader's convenience we reformulate and prove this density result in Section 2 of the present paper.
\end{Rem}

\begin{Rem}
\begin{itemize}
 \item Theorem \ref{BV} also extends to more general strictly convex and strictly increasing data fitting terms $h:[0,\infty)\rightarrow[0,\infty)$ with $h(0)=0$. For instance we can choose 
\begin{align*}
&h(|w-f|):=|w-f|\log(1+|w-f|)\quad\text{or}\\
&h(|w-f|):=\Phi_{\nu}(|w-f|),\quad\nu>1,
\end{align*}
where $\Phi_{\nu}$ denotes the integrand from \eqref{Phimu} below.
\item Note that Theorem \ref{BV} partially extends to the case $\zeta=1$. In fact we a priori do not get uniqueness of solutions on the set $\Omega-D$.
\end{itemize}
\end{Rem}

\begin{Rem}\label{remark2}
\begin{itemize}
 \item Part (b) of Theorem \ref{BV} shows uniqueness of solutions on the set $\Omega-D$ and the measures $\nabla u$ and $\nabla\widetilde{u}$ of minima $u,\widetilde{u}$ may only differ in their singular parts.
\item The statements (c) and (d) in Theorem \ref{BV} reveal that the minimization of $K$ in $BV(\Omega)^M\cap L^{\zeta}(\Omega-D)^M$ represents a natural extension of the original variational problem \eqref{vp1} which in general fails to have a solution in $W^{1,1}(\Omega)^M\cap L^{\zeta}(\Omega-D)^M$. Furthermore, it holds $I=K$ on $W^{1,1}(\Omega)^M\cap L^{\zeta}(\Omega-D)^M$ and this fact also stresses that we have a reasonable extension of the functional $I$. 
\end{itemize}
\end{Rem}

\begin{Rem}\label{Voraussetzungen}
The assumptions on our density $F$ in Theorem \ref{BV} can be weakened in such a way that we require that $F$ is strictly convex and of linear growth in the sense of \eqref{lg}.
\end{Rem}

From assertion (b) of Theorem \ref{BV} we can deduce the uniqueness in case of $W^{1,1}$-solvability. Furthermore, in the general case, the $L^{\frac{n}{n-1}}$-deviation $\|u-v\|_{L^{\frac{n}{n-1}}(\Omega)}$ of different solutions $u,v$ on the damaged region $D$ can be estimated in terms of $\n^s(u-v)$. In case $\zeta=2$, the statements below have been derived in \cite{FT}, Corollary 1.1. 
\begin{Cor}\label{corollary}
With the notation and the assumptions of Theorem \ref{BV} it holds
\begin{enumerate}[(i)]
 \item If there exists an element $u\in\mathcal{M}$ such that $u\in W^{1,1}(\Omega)^M\cap L^{\zeta}(\Omega-D)^M$, then it holds $\mathcal{M}=\{u\}$.
\item Suppose that $\overline{D}\subset\Omega$. Then there exists a constant $c=c(n,M)$ such that we have for all $u,v\in\mathcal{M}$
\[
 \|u-v\|_{L^{\frac{n}{n-1}}(\Omega)}= \|u-v\|_{L^{\frac{n}{n-1}}(D)}\leq  c|\n^s(u-v)|(\overline{D})
\]
where, in particular, the constant $c$ on the right-hand side does not depend on the free parameter $\lambda$.
\end{enumerate}
\end{Cor}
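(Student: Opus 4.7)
The plan is to exploit Theorem \ref{BV}(b)---which for any two minimizers $u,v$ guarantees that $w:=u-v$ satisfies $w=0$ a.e.\ on $\Omega-D$ and $\n^{a}w=0$ a.e.\ on $\Omega$---together with the linear growth bound on $F$ and the $BV$-Sobolev embedding on $\R^{n}$.

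For part (i), assume $u\in\mathcal{M}\cap W^{1,1}(\Omega)^M$ and pick any other minimizer $v\in\mathcal{M}$. Since $\n u=\n^{a}u=\n^{a}v$ a.e.\ and $u=v$ a.e.\ on $\Omega-D$, the identity $K[u]=K[v]$ collapses to
\[
0=\int_{\Omega}F^{\infty}\bigl(\n^{s}v/|\n^{s}v|\bigr)\,d|\n^{s}v|\;\geq\;\nu_{2}|\n^{s}v|(\Omega),
\]
where I use $F^{\infty}(P)\geq\nu_{2}|P|$, a direct consequence of the lower bound in \eqref{lg} (take $t\to\infty$ in $F(tP)/t\geq\nu_{2}|P|-\nu_{3}/t$). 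Hence $\n^{s}v=0$, so $v\in W^{1,1}(\Omega)^M$. Then $v-u$ lies in $W^{1,1}(\Omega)^M$ with vanishing gradient a.e.\ and vanishes on the set $\Omega-D$ of positive measure (by \eqref{L}), so a standard Poincar\'e argument forces $u=v$ a.e., i.e.\ $\mathcal{M}=\{u\}$.

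For part (ii), set $w:=u-v$. The hypothesis $\overline{D}\subset\Omega$ implies $\operatorname{dist}(\overline{D},\partial\Omega)>0$, so the open set $\Omega\setminus\overline{D}$ is contained in $\Omega-D$ and $w$ vanishes a.e.\ there; in particular $w\equiv0$ in a genuine open neighbourhood of $\partial\Omega$. Therefore the zero extension $\widetilde w\in BV(\R^{n})^M$ of $w$ picks up no boundary contribution, and combined with $\n^{a}w=0$ one obtains
\[
|\n\widetilde w|(\R^{n})=|\n^{s}w|(\overline{D}).
\]
Applying the vector-valued Sobolev embedding $BV(\R^{n})^M\hookrightarrow L^{n/(n-1)}(\R^{n})^M$ (with constant $c=c(n,M)$) gives
\[
\|u-v\|_{L^{n/(n-1)}(D)}=\|\widetilde w\|_{L^{n/(n-1)}(\R^{n})}\leq c\,|\n\widetilde w|(\R^{n})=c\,|\n^{s}(u-v)|(\overline{D}),
\]
and the equality $\|u-v\|_{L^{n/(n-1)}(\Omega)}=\|u-v\|_{L^{n/(n-1)}(D)}$ in the statement is simply Theorem~\ref{BV}(b). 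The constant $c$ depends only on $n,M$ via the embedding, so in particular not on $\lambda$, $F$, $D$, $f$.

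The main delicate point I anticipate is the verification that the zero-extension $\widetilde w$ does not introduce any singular measure on $\partial\Omega$: this requires promoting the information \emph{$w=0$ a.e.\ on $\Omega-D$} to \emph{$w=0$ in a genuine open neighbourhood of $\partial\Omega$}, and it is precisely at this step that the hypothesis $\overline{D}\subset\Omega$ is essential. Once this is settled, identifying $|\n\widetilde w|(\R^{n})$ with $|\n^{s}w|(\overline{D})$ is routine, because $\n^{a}w=0$ forces $\n w=\n^{s}w$ and the latter is concentrated on $\overline{D}$.
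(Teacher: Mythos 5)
Your proof is correct, and it is essentially the argument the paper itself delegates to the references (the paper gives no in-text proof, only the remark pointing to \cite{FT}, Remark 1.1 and \cite{BF2}, Corollary 1.1): part (i) via $F^{\infty}\geq\nu_2$ on unit directions forcing $\nabla^s v=0$ and then the Poincar\'e inequality \eqref{poincare}, part (ii) via the zero extension of $u-v$ (harmless because $u-v$ vanishes a.e.\ on the open set $\Omega\setminus\overline{D}$, so the trace on $\partial\Omega$ is zero) together with the $BV(\R^n)\hookrightarrow L^{n/(n-1)}$ inequality, whose constant depends only on $n$ and $M$ and hence not on $\lambda$. You also correctly identify and handle the only delicate point, namely that $\overline{D}\subset\Omega$ is what makes $\nabla(u-v)=\nabla^s(u-v)$ concentrated on $\overline{D}$ and the extension free of boundary contributions.
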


\begin{Rem}
As outlined in \cite{FT}, Remark 1.1, a proof of Corollary \ref{corollary} can be deduced by observing that Corollary 1.1 from \cite{BF2} extends to any dimension $n\geq2$ and remains also valid for vector-valued functions, i.e. for the case $M\geq2$. The corresponding references can be found in \cite{BF2}, proof of Corollary 1.1. Furthermore, Corollary \ref{corollary}, (ii) can be interpreted in such a way that the $L^{\frac{n}{n-1}}$-deviation $\|u-v\|_{L^{\frac{n}{n-1}}}$ of different solutions $u,v$ on the inpainting region $D$ is governed by the total variation of the singular part $\nabla^s(u-v)$ of the tensor-valued Radon measure $\nabla(u-v)$.
\end{Rem}

Considering selected variational problems in physics (e.g. in the theory of perfect plasticity, see \cite{FS} for a survey), another approach to problem \eqref{vp1} seems to be more natural. An essential motivation for studying dual variational problems is the uniqueness of the corresponding maximizer $\sigma$ under rather weak assumptions (for more detailed information we refer to section 2.2 in \cite{Bi1}). Moreover, the dual solution $\sigma$ usually admits a clear geometric or physical interpretation. For instance, we can note that in the theory of minimal surfaces, $\sigma$ corresponds to the normal of the surface and in the theory of plasticity, $\sigma$ represents the stress tensor. However we do not know an adequate interpretation of the dual solution $\sigma$ in the context of image processing.\\
\newline
Let $F$ satisfy \eqref{v1}--\eqref{v3} and suppose that \eqref{L} as well as \eqref{f} hold. Further we fix a real number $\zeta>1$. Following \cite{ET} we define the Lagrangian for functions $w\in W^{1,1}(\Omega)^M\cap L^{\zeta}(\Omega-D)^M$ and $\varkappa\in L^{\infty}(\Omega)^{nM}$ by the equation
\[
 l(w,\varkappa):=\int\limits_{\Omega}{[\varkappa :\n w-F^*(\varkappa)]dx}+\frac{\lambda}{\zeta}\int\limits_{\Omega-D}{|w-f|^{\zeta}dx},
\]
where
\[
 F^*(Q):=\sup\limits_{P\in \R^{nM}}{[P:Q-F(P)]},\quad Q\in\R^{nM},
\]
represents the conjugate function to $F$. By means of \cite{ET}, Proposition 2.1, p.271, we obtain the representation
\begin{align}
 \label{representationdual} \int\limits_{\Omega}{F(P)dx}=\sup\limits_{\varkappa\in L^{\infty}(\Omega)^{nM}}{\int\limits_{\Omega}{[\varkappa:P-F^{*}(\varkappa)]dx}}
\end{align}
for functions $P\in L^1(\Omega)^{nM}$ leading to the following alternative formula for the functional $I$
\begin{align}
 \label{Ineu} I[w]=\sup\limits_{\varkappa\in L^{\infty}(\Omega)^{nM}}{l(w,\varkappa)},\quad w\in W^{1,1}(\Omega)^M\cap L^{\zeta}(\Omega-D)^M,
\end{align}
and by virtue of \eqref{Ineu} we can introduce the dual functional
\begin{align*}
 R:L^{\infty}(\Omega)^{nM}&\rightarrow[-\infty,\infty],\\
R[\varkappa]&:=\inf\limits_{w\in W^{1,1}(\Omega)^M\cap L^{\zeta}(\Omega-D)^M}{l(w,\varkappa)}.
\end{align*}
Consequently, the dual problem is: to maximize $R$ among all functions $\varkappa\in L^{\infty}(\Omega)^{nM}$.\\
\newline
In what follows we would like to present our results on the dual variational problem associated to \eqref{vp1}. In case $\zeta=2$, the results stated below can be found in \cite{FT}, Theorem 1.2.
\begin{Satz}\label{dual}
Suppose that we are given a real number $\zeta>1$. Further let \eqref{L} and \eqref{f} hold and assume that $F$ satisfies \eqref{v1}--\eqref{v3}. Then we have:
\begin{enumerate}[(a)]   
\item The dual problem
\[
 R\rightarrow\text{max}\quad\text{in}\,\,L^{\infty}(\Omega)^{nM}
\]
admits at least one solution. Moreover, the $\inf$-$\sup$ relation 
\[
 \inf\limits_{v\in W^{1,1}(\Omega)^M\cap L^{\zeta}(\Omega-D)^M}{I[v]}=\sup\limits_{\sigma\in L^{\infty}(\Omega)^{nM}}{R[\sigma]}
\]
is valid.
\item We have uniqueness of the dual solution if the conjugate function $F^{*}$ is strictly convex on the set $\{p\in\R^{nM}:F^{*}(p)<\infty\}$. 
\end{enumerate}
\end{Satz}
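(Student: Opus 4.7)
For part (a), the plan is to put problem \eqref{vp1} into the Fenchel--Rockafellar framework of \cite{ET}, Chapter III. I would set $V:=W^{1,1}(\Omega)^M \cap L^\zeta(\Omega-D)^M$ (a Banach space with the sum-of-norms topology), $Y:=L^1(\Omega)^{nM}$, $\Lambda w:=\nabla w$, $G(P):=\int_\Omega F(P)\,dx$ and $F_0(w):=\frac{\lambda}{\zeta}\int_{\Omega-D}|w-f|^\zeta\,dx$, so that $I[w]=F_0(w)+G(\Lambda w)$. The hypotheses of Proposition~2.3 in Chapter~III of \cite{ET} (or one of its standard variants) are then to be verified at the trivial point $w_0=0$: both $F_0$ and $G$ are proper, convex and lower semicontinuous; $F_0(0)<\infty$ by \eqref{f}; and $G$ is in fact globally Lipschitz on $Y$ with constant $\nu_1$, thanks to \eqref{v2} together with $F(0)=0$, so in particular $G$ is continuous at $\Lambda(0)=0$. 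The abstract duality theorem then yields the inf-sup identity and attainment of the supremum in the abstract dual problem. To match the latter with $R$, I would compute $G^\ast(\varkappa)=\int_\Omega F^\ast(\varkappa)\,dx$ for $\varkappa\in L^\infty(\Omega)^{nM}$ (using \eqref{representationdual} applied to $F^\ast$ via $F^{\ast\ast}=F$), and identify $\Lambda^\ast\varkappa$ through $\langle\Lambda^\ast\varkappa,w\rangle=\int_\Omega\varkappa:\nabla w\,dx$; modulo the usual sign convention, the abstract dual then becomes $\sup_{\varkappa\in L^\infty}R[\varkappa]$, completing (a).

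For part (b), the strategy is to argue that $R$ is strictly concave on its effective domain. Writing
\[
R[\varkappa]=\Phi[\varkappa]-\int_\Omega F^\ast(\varkappa)\,dx,\qquad \Phi[\varkappa]:=\inf_{w\in V}\bigg\{F_0(w)+\int_\Omega \varkappa:\nabla w\,dx\bigg\},
\]
the functional $\Phi$ is concave as an infimum of maps that are affine in $\varkappa$. Under the assumption that $F^\ast$ is strictly convex on $\{F^\ast<\infty\}$, the integral functional $\varkappa\mapsto\int_\Omega F^\ast(\varkappa)\,dx$ is strictly convex on its effective domain $\mathcal{D}:=\{\varkappa\in L^\infty:F^\ast(\varkappa)<\infty\text{ a.e.}\}$, because pointwise strict convexity on $\{\sigma_1\neq\sigma_2\}$ integrates to a strict inequality as soon as this set has positive measure. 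If $\sigma_1,\sigma_2$ are two dual maximizers, then $R[\sigma_i]$ is finite (it is bounded above by the finite primal infimum from (a)), which forces $\sigma_1,\sigma_2\in\mathcal{D}$, and the strict concavity of $R$ on $\mathcal{D}$ rules out $\sigma_1\neq\sigma_2$.

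The main technical obstacle I anticipate is the careful verification of the qualification condition of the Fenchel--Rockafellar theorem on the non-reflexive space $V$, together with the book-keeping involved in translating the abstract dual produced by \cite{ET} into the explicit form $\sup R$ with the correct sign convention; the linear growth bound \eqref{lg} is precisely what makes $G$ everywhere finite and continuous on $Y$, which is crucial here. Once this has been set up, existence of a dual maximizer is a direct output of the cited theorem, and no separate weak$^\ast$-compactness argument is needed; the uniqueness statement in (b) is then essentially a one-line strict-convexity consequence.
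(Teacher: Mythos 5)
Your plan is essentially correct, but it takes a genuinely different route from the paper. The paper itself only remarks that assertion (a) ``can probably be deduced'' from the abstract duality machinery of \cite{ET}; what it actually does is a constructive vanishing-viscosity argument: it regularizes with $F_\delta(Z)=\frac{\delta}{2}|Z|^2+F(Z)$, solves the problems \eqref{Id} in $W^{1,2}$, derives uniform bounds, passes to limits $\overline u$ and $\tau=\lim DF(\nabla u_\delta)$ (weak-$*$ in $L^\infty$), and then shows by hand -- via the Euler equation for $u_\delta$, the identity $F(\nabla u_\delta)=\tau_\delta:\nabla u_\delta-F^*(\tau_\delta)$, and H\"older/Young estimates tailored to the $\zeta$-growth of the fidelity term -- that $\tau$ maximizes $R$ and that the inf-sup relation holds. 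Your Fenchel--Rockafellar setup ($V=W^{1,1}\cap L^\zeta$, $Y=L^1$, $\Lambda=\nabla$, with $G$ finite and $\nu_1$-Lipschitz on $L^1$ by \eqref{v2}, so the qualification condition holds at $w_0=0$ and the finite primal infimum transfers) is shorter and delivers existence of a maximizer plus the inf-sup identity in one stroke; what it does not deliver are the by-products of the paper's construction, namely the convergences \eqref{dv14}--\eqref{dv16} (that $(u_\delta)$ is $I$-minimizing and $\overline u$ is $K$-minimizing), the identification of a dual solution as a limit of the smooth stress tensors $DF_\delta(\nabla u_\delta)$ -- of interest numerically and in the spirit of the duality formula of Theorem \ref{Eindeutigkeit} -- and the explicit handling of the $\zeta\neq 2$ fidelity term, which is exactly where the paper says the proof of \cite{FT} needs nontrivial modification. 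For (b) your strict-concavity argument is the same idea the paper delegates to \cite{BF2}. Two small repairs: the formula $G^*(\varkappa)=\int_\Omega F^*(\varkappa)\,dx$ should be justified by the conjugation theorem for integral functionals on decomposable spaces (e.g. \cite{ET}, Chapter IX), not by re-reading \eqref{representationdual} backwards through $F^{**}=F$; and in (b) the finiteness of $R[\sigma_i]$ and hence $\int_\Omega F^*(\sigma_i)\,dx<\infty$ comes from the fact that a maximizer attains $\sup R=\inf I\ge 0$ (a finite value, using $\Phi[\sigma_i]\le F_0(0)<\infty$ by \eqref{f}), not from $R$ being merely bounded above by the primal infimum.
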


\begin{Rem}
Considering the structure of the data fitting term $\int\limits_{\Omega-D}{|w-f|^{\zeta}\,dx}$ in case $\zeta>1,\zeta\neq2$, we emphasize that we are confronted with severe difficulties. As a consequence we can not immediately refer to the proof of Theorem 1.2 in \cite{FT} adding some obvious modifications in the case $\zeta>1,\zeta\neq2$.
\end{Rem}

In order to derive uniqueness of the dual solution $\sigma$, the imposed hypothesis on $F^*$ in Theorem \ref{dual} (b) can be dropped. Further, the unique dual solution is related to the $BV$-solutions from Theorem \ref{BV} through an equation of stress-strain type. Precisely, we can state the following results which in case $\zeta=2$ have already been established in \cite{FT}, Theorem 1.3.
\begin{Satz}\label{Eindeutigkeit}
Let us assume the hypotheses of Theorem \ref{dual}. Then the dual problem
\[
 R\rightarrow\text{max}\quad\text{in}\,\,L^{\infty}(\Omega)^{nM}
\]
admits a unique solution $\sigma$. We further have the duality formula 
\[
 \sigma=DF(\n^a u)\quad\text{a.e. on}\,\,\Omega,
\]
where $u$ denotes an arbitrary $K$-minimizer from the space $BV(\Omega)^M\cap L^{\zeta}(\Omega-D)^M$.
\end{Satz}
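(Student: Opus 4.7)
The plan is to combine the Fenchel--Young inequality with the $\inf$--$\sup$ relation from Theorem \ref{dual}(a) to derive the pointwise identity $\sigma=DF(\nabla^a u)$ a.e.\ on $\Omega$; the uniqueness of $\sigma$ then follows for free from the uniqueness of $\nabla^a u$ already established in Theorem \ref{BV}(b), without any strict-convexity assumption on $F^{*}$. Existence of a dual maximizer is granted by Theorem \ref{dual}(a), so the whole proof reduces to the duality formula.

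\textbf{Key steps.} First, by Theorem \ref{BV}(a) and Theorem \ref{dual}(a) select a $K$-minimizer $u\in BV(\Omega)^M\cap L^{\zeta}(\Omega-D)^M$ and any dual solution $\sigma\in L^{\infty}(\Omega)^{nM}$. Theorem \ref{BV}(c) together with the $\inf$--$\sup$ relation of Theorem \ref{dual}(a) gives $K[u]=\inf K=\inf I=\sup R=R[\sigma]$. Second, invoke the density result recalled in Section~2 (Lemma~2.2 of \cite{FT}) to produce a sequence $u_j\in W^{1,1}(\Omega)^M\cap L^{\zeta}(\Omega-D)^M$ with $u_j\to u$ in $L^{1}(\Omega)^M$ and $I[u_j]\to K[u]$. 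Since this construction is essentially a localized mollification of $u$ combined with a cut-off near $\partial\Omega$, Lebesgue's differentiation theorem applied to the Radon--Nikodym decomposition of the measure $\nabla u$ yields, along a subsequence, the pointwise convergence $\nabla u_j(x)\to\nabla^a u(x)$ for a.e.\ $x\in\Omega$. Third, from the chain of inequalities $I[u_j]\geq l(u_j,\sigma)\geq R[\sigma]=K[u]$ one concludes $l(u_j,\sigma)\to K[u]$, and the duality gap
\[
I[u_j]-l(u_j,\sigma)=\int_{\Omega}\bigl[F(\nabla u_j)+F^{*}(\sigma)-\sigma:\nabla u_j\bigr]\,dx
\]
tends to zero. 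Note that the data-fitting contribution $\tfrac{\lambda}{\zeta}\int_{\Omega-D}|u_j-f|^{\zeta}\,dx$ cancels identically in this difference, so the passage from $\zeta=2$ to general $\zeta>1$ introduces no additional difficulty at this stage.

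\textbf{Conclusion and main obstacle.} The integrand above is non-negative by the Fenchel--Young inequality, hence it converges to $0$ in $L^{1}(\Omega)$ and, along a further subsequence, a.e.\ on $\Omega$. Combining this with the a.e.\ convergence $\nabla u_j\to\nabla^a u$ and the continuity of $F$ and of $P\mapsto\sigma(x):P$, one passes to the limit to obtain $F(\nabla^a u)+F^{*}(\sigma)=\sigma:\nabla^a u$ a.e.\ on $\Omega$. Because $F\in C^{1}(\mathbb{R}^{nM})$, the equality case of Fenchel--Young forces $\sigma=DF(\nabla^a u)$ a.e., which is the claimed duality formula. Since Theorem \ref{BV}(b) asserts that $\nabla^a u$ is the same for every $K$-minimizer, the identity determines $\sigma$ uniquely, proving uniqueness of the dual solution. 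The delicate point is the legitimacy of the pointwise convergence $\nabla u_j\to\nabla^a u$ a.e.: one must track the construction in Lemma~2.2 of \cite{FT} carefully to confirm that, away from the boundary-modification zone, it reduces to a standard convolution (for which Lebesgue differentiation applies), while still enforcing $u_j\in L^{\zeta}(\Omega-D)^M$ in the regime $\zeta>n/(n-1)$. If this pointwise convergence turns out to be unavailable, a Reshetnyak-type or Young-measure argument localized on the absolutely continuous part of $\nabla u$ provides a fallback yielding the same pointwise equality.
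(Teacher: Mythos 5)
Your route is genuinely different from the paper's. The paper argues in the opposite direction: it first proves (Lemma \ref{maximierer}) that the specific tensor $\sigma_0=DF(\nabla^a u)$ \emph{is} an $R$-maximizer, using the Euler equations $\frac{d}{dt}\big|_0K[u+tv]=0$ and $\frac{d}{dt}\big|_0K[u+tu]=0$ together with H\"older's and Young's inequalities, and then excludes any second maximizer by Bildhauer's convexity argument (\cite{Bi1}, Theorems 2.13/2.15): $\sigma_0$ takes its values in the open convex set $U=\mathrm{Im}(DF)$, where $F^*$ is strictly convex, so a midpoint of two distinct maximizers would strictly increase $R$. You instead show that \emph{every} maximizer must equal $DF(\nabla^a u)$, by letting the duality gap $I[u_j]-l(u_j,\sigma)$ vanish along the approximating sequence of Lemma \ref{BV_Approx} and reading off the pointwise Fenchel--Young equality, after which uniqueness is a free consequence of Theorem \ref{BV}(b). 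Conceptually this is attractive: it bypasses Lemma \ref{maximierer} and the strict convexity of $F^*$ on $\mathrm{Im}(DF)$ altogether, and it treats the fidelity term trivially (it cancels in the gap), so the passage from $\zeta=2$ to general $\zeta$ is indeed painless at that point. (Two small items you use implicitly are fine: $F^*(\sigma)<\infty$ a.e.\ with $F^*(\sigma)\in L^1(\Omega)$ follows from $R[\sigma]=\sup R=\inf I>-\infty$ by testing $l(\cdot,\sigma)$ with $w=0$; and $\partial F(P)=\{DF(P)\}$ since $F\in C^1$.)

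The genuine gap is the step $\nabla u_j\to\nabla^a u$ a.e., which you assert but which is neither part of the statement of Lemma \ref{BV_Approx} nor a consequence of its conclusions (i)--(iii): $L^1$-convergence plus convergence of the area functionals does not force pointwise convergence of the gradients of a particular approximating sequence. Moreover the sequence of Lemma \ref{BV_Approx} is not a plain mollification of $u$: it is a diagonal sequence $u_k=(\widetilde u_{m_k})_{\rho_k}$ built from truncations $\widetilde u_m=\Phi_m\circ\widetilde u$ of an extension of $u$, with radii $\rho_k$ chosen only to control the quantities in \eqref{bv17}. The claim is rescuable, but you must reopen the construction: (1) for fixed $w\in BV$, $(\nabla w)\ast\eta_\rho(x)\to\nabla^a w(x)$ for $\mathcal L^n$-a.e.\ $x$ by the Besicovitch differentiation theorem (the mollified singular part tends to $0$ a.e.\ because $\nabla^s w\perp\mathcal L^n$); (2) $\nabla^a\widetilde u_m=\nabla^a\widetilde u$ a.e.\ on $\{|\widetilde u|\le m\}$ by locality of approximate gradients, so $\nabla^a\widetilde u_{m}\to\nabla^a u$ a.e.\ in $\Omega$; (3) since a.e.\ convergence in $\rho$ for fixed $m$ gives convergence in measure on the bounded set $\Omega$, you may shrink each $\rho_k$ further so that additionally $\mathcal L^n\big(\{x\in\Omega:|\nabla(\widetilde u_{m_k})_{\rho_k}(x)-\nabla^a\widetilde u_{m_k}(x)|>1/k\}\big)<1/k$, which yields $\nabla u_k\to\nabla^a u$ in measure and hence a.e.\ along a subsequence. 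Without spelling out (1)--(3) (or carrying out the Reshetnyak/Young-measure fallback you mention, which itself is delicate because $\sigma$ is only in $L^\infty$), the passage from ``gap integrand $\to 0$ in $L^1$'' to the pointwise identity $F(\nabla^a u)+F^*(\sigma)=\sigma:\nabla^a u$ a.e.\ is not justified; with them, your argument goes through and gives an alternative proof of Theorem \ref{Eindeutigkeit}.
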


At this stage we finish the introduction by establishing a partial regularity result for any $K$-minimizer. For this purpose we assume that the density $F:\R^{nM}\rightarrow[0,\infty)$ satisfies the following (stronger) hypotheses
\begin{align}
 \label{v1s} &F\in C^2(\R^{nM}), F(0)=0, DF(0)=0,\\
\label{v2s} &|DF(P)|\leq\nu_1,\\
\label{v3s} &F(P)\geq \nu_2|P|-\nu_3,\\
\label{v4} &0<D^2F(P)(Q,Q)\leq \nu_4 (1+|P|)^{-1}|Q|^2.
\end{align}
with constants $\nu_1,\nu_2,\nu_4>0$, $\nu_3\in\R$, for all $P,Q\in\R^{nM}, Q\neq0$. Note that \eqref{v4} implies that $F$ is strictly convex and that of course, $F$ is of linear growth in the sense of \eqref{lg}.\\
Furthermore we suppose that $F$ is of the form
\begin{align}
\label{bs} F(P)=\Phi(|P|),\quad P\in\R^{nM},
\end{align}
with $\Phi:[0,\infty)\rightarrow[0,\infty)$ of class $C^2$. In order to have \eqref{v1s}--\eqref{v4} we then require that $\Phi$ fulfills the hypotheses $(1.3^*)-(1.4^*)$ and $(1.6^*)$ from \cite{BF2}.\\
As a consequence, Theorem \ref{BV} can be proven completely along the lines of the proof of Theorem 1.2 in \cite{BF2} without referring to the density results stated in Section 2 of \cite{FT} by observing that $K$-minimizing sequences $(u_m)$ can be chosen in such a way that we have
\[
 \sup\limits_{\Omega}{|u_m|}\leq\sup\limits_{\Omega-D}{|f|}
\]
which gives compactness of $(u_m)$ in $BV(\Omega)^M$ (we sketch the proof of this inequality in Section 5 of the present paper). Using \eqref{bs}, the functional $K$ from \eqref{K} reads 
\begin{align*}
 K[w]=&\int\limits_{\Omega}{\Phi(|\n^a w|)dx}+\overline{c}|\n^s w|(\Omega)+\frac{\lambda}{\zeta}\int\limits_{\Omega-D}{|w-f|^{\zeta}dx},\\
&w\in BV(\Omega)^M\cap L^{\zeta}(\Omega-D)^M,
\end{align*}
where the limit $\overline{c}:=\lim\limits_{t\rightarrow\infty}{\frac{\Phi(t)}{t}}$ exists in $(0,\infty)$ since $\Phi$ is of linear growth.\\
In the same spirit, the arguments used during the proof of Theorem 1.4 in \cite{BF2} now directly imply Theorem \ref{dual}. Moreover, we have uniqueness of the dual solution.\\
An example of a density satisfying the condition \eqref{bs} with $\Phi$ of class $C^2$ fulfilling $(1.3^*)-(1.4^*)$ and $(1.6^*)$ from \cite{BF2} is
\begin{align}
\label{Phimu} \Phi(t):=\Phi_{\mu}(t):=\int\limits_{0}^{t}{\int\limits_{0}^{s}{(1+r)^{-\mu}\,dr\,ds}},\quad t\geq0.
\end{align}
Note that the above example even satisfies the much stronger condition of $\mu$-ellipticity $(1.4_{\mu}^*)$ from \cite{BF2} with the prescribed elliptic parameter $\mu>1$.\\
Besides, we suppose that 
\begin{align}
 \label{funendlich} f\in L^{\infty}(\Omega-D)^M.
\end{align}
At this point we can state a partial regularity result for solutions of problem \eqref{Kv}. Precisely we can show
\begin{Satz}\label{partialregularity}
Let us fix a real number $\zeta>1$ and suppose that we have \eqref{L} as well as \eqref{funendlich}. Further we assume that $F$ satisfies \eqref{bs} with a function $\Phi$ of class $C^2$ fulfilling $(1.3^*)-(1.4^*)$ and $(1.6^*)$ from \cite{BF2}. Then for each $K$-minimizer $u$ there exists an open subset $\Omega_0^u$ of $\Omega$ such that $u\in C^{1,\gamma}(\Omega_0^u)^M$ for all $0<\gamma\leq\frac{1}{2}$ and $\mathcal{L}^{n}(\Omega-\Omega_0^u)=0$.
\end{Satz}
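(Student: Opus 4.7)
The plan is to regularize $F$ by a quadratic perturbation, establish uniform-in-$\delta$ Caccioppoli estimates under the $\mu$-ellipticity hypothesis, and invoke a standard excess-decay blow-up argument, following the scheme of \cite{BF2}. By the maximum principle sketched in Section 5, every $K$-minimizer $u$ satisfies $\|u\|_{L^\infty(\Omega)} \leq \|f\|_{L^\infty(\Omega-D)}$. Setting
\[
F_\delta(P) := F(P) + \tfrac{\delta}{2}|P|^2, \qquad \delta \in (0,1),
\]
I consider the regularized functional
\[
K_\delta[w] := \int_\Omega F_\delta(\n w)\,dx + \frac{\lambda}{\zeta}\int_{\Omega-D}|w-f|^\zeta\,dx
\]
on $W^{1,2}(\Omega)^M \cap L^\zeta(\Omega-D)^M$. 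Strict convexity and quadratic growth from below give a unique minimizer $u_\delta$; the hypotheses $(1.3^*)$-$(1.4^*)$ and $(1.6^*)$, together with the boundedness of the zero-order term $\lambda|u_\delta-f|^{\zeta-2}(u_\delta-f)\chi_{\Omega-D}$ in the Euler equation, render $u_\delta$ of class $W^{2,2}_{\text{loc}} \cap C^{1,\alpha}_{\text{loc}}$ by standard theory for nondegenerate elliptic systems. A truncation argument in the spirit of Section 5 also yields $\|u_\delta\|_{L^\infty(\Omega)} \leq \|f\|_{L^\infty(\Omega-D)}$ uniformly in $\delta$.

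Next, I would derive uniform-in-$\delta$ Caccioppoli-type estimates. Comparing $K_\delta[u_\delta]$ with $K_\delta[u]$ via the density result of Section 2 (approximating $u \in BV$ by $W^{1,2}$-functions) yields the energy bound
\[
\delta\int_\Omega |\n u_\delta|^2\,dx + \int_\Omega F(\n u_\delta)\,dx + \frac{\lambda}{\zeta}\int_{\Omega-D}|u_\delta-f|^\zeta\,dx \leq c,
\]
independent of $\delta$. Differentiating the Euler equation, testing with $\eta^2\partial_\gamma u_\delta$ for a cutoff $\eta$ and invoking $(1.4^*)$ then produces, by the standard computation of \cite{BF2}, a local estimate of the form
\[
\int_{B_{r/2}(x_0)}(1+|\n u_\delta|^2)^{-\mu/2}|\n^2 u_\delta|^2\,dx \leq \frac{c}{r^2}\int_{B_r(x_0)}\bigl(1+F(\n u_\delta)\bigr)\,dx + c\,r^{n-2},
\]
for $B_r(x_0) \subset\subset \Omega$. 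Passing $\delta \downarrow 0$ with $u_\delta \to u$ in $L^1$ and weak-$\ast$ convergence of $DF_\delta(\n u_\delta)$ to the unique dual maximizer $\sigma = DF(\n^a u)$ from Theorem \ref{Eindeutigkeit} preserves this estimate in integrated form on the regular set.

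Partial regularity of $u$ is then obtained by the standard blow-up scheme. With excess
\[
E(x_0,r) := r^{-n}\int_{B_r(x_0)}|\n u - (\n u)_{x_0,r}|\,dx + r^{1-n}|\n^s u|(B_r(x_0)),
\]
I define $\Omega_0^u$ as the set of $x_0 \in \Omega$ admitting some $r > 0$ with $B_r(x_0) \subset\subset \Omega$ and $E(x_0,r) < \epsilon_0$. Openness is clear, and the complement is $\mathcal{L}^n$-negligible by a Lebesgue-point argument for $BV$. At a regular point one proves by contradiction a decay inequality $E(x_0,\theta r) \leq \theta^{1/2} E(x_0,r)$ for some $\theta \in (0,1)$: suitably rescaled sequences of solutions to the regularized Euler equations converge to a solution of a constant-coefficient linear elliptic system, with coefficient tensor $D^2 F$ frozen at the limit gradient, whose Campanato-type $C^{1,1/2}$-estimate contradicts the failure of decay. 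Iteration of the decay and Campanato's lemma yield $\n u \in C^{0,\gamma}(\Omega_0^u)^{nM}$ for every $\gamma \in (0,1/2]$, and hence $u \in C^{1,\gamma}(\Omega_0^u)^M$ as claimed.

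The main technical difficulty is the combination of the linear growth of $F$, with its attendant $\mu$-elliptic degeneracy, and the general $L^\zeta$-data fitting term for $\zeta \neq 2$: all constants in the Caccioppoli estimate and in the blow-up argument must be uniform in $\delta$, and the right-hand side $\lambda|u_\delta - f|^{\zeta-2}(u_\delta-f)\chi_{\Omega-D}$ of the Euler equation must behave correctly under the blow-up. Both issues are handled by the uniform $L^\infty$-bound on $u_\delta$ and $f$: this right-hand side is then globally bounded, so that after the characteristic rescaling it becomes a vanishing perturbation in the limit linear system, and the argument for general $\zeta > 1$ reduces effectively to the template from \cite{BF2} treating $\zeta = 2$.
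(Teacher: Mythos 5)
Your route is genuinely different from the paper's, and as written it has gaps at its two central steps. The paper does not run a blow-up argument at all: it checks that the hypotheses of Corollary 3.3 in \cite{S} hold (conditions (H1)--(H4) for $F$, which follow from \eqref{bs} and \eqref{v1s}--\eqref{v4}; the H\"older condition \eqref{hc} for $g(x,y)=\frac{\lambda}{\zeta}\mathbb{1}_{\Omega-D}|y-f(x)|^{\zeta}$ with $\beta=1$; and the Morrey condition \eqref{mo}), where the only genuinely new ingredient is the maximum principle \eqref{maxprin}, proved by truncation using the structure condition \eqref{bs} and the arguments of \cite{BF6}. The exponent $\frac12$ is then the $\alpha/2$ of Schmidt's theorem with the optimal Morrey exponent $\alpha=1$; it is \emph{not} a Campanato exponent of a constant-coefficient limit system (such systems admit estimates for every exponent $\gamma<1$, so your reasoning would incorrectly suggest all $\gamma<1$). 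The restriction to $\frac12$ genuinely originates in the interaction of the excess with the fidelity term, and this is precisely the point the paper flags: because of the data-fitting term one cannot simply adapt Theorem 1.1 of \cite{AG} ``with obvious modifications''. In your sketch this is exactly the step that is asserted rather than proved: with only an $L^{\infty}$-bound, the zero-order term contributes an \emph{additive} quantity of order $r$ (not proportional to the excess) to any comparison/decay estimate, so a pure multiplicative decay $E(x_0,\theta r)\leq\theta^{1/2}E(x_0,r)$ cannot be extracted from the blow-up contradiction as stated; one needs a hybrid excess or a case distinction $E(x_0,r)\gtrless r^{\alpha}$, and working this out in the $BV$/linear-growth setting is essentially the content of \cite{S}.

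A second structural gap: your $\delta$-regularization does not connect to the \emph{arbitrary} $K$-minimizer $u$ of the statement. $K$-minimizers are not unique --- Theorem \ref{BV}(b) only gives uniqueness of $\nabla^{a}u$ and of $u$ on $\Omega-D$ --- and the $u_{\delta}$ converge (along a subsequence) to one particular minimizer $\overline{u}$ as in Section 3; a given $u$ may differ from $\overline{u}$ by a nontrivial singular part on $D$. Your uniform Caccioppoli bounds concern $u_{\delta}$, while the excess $E(x_0,r)$ in the second half is built from $\nabla u$ and $\nabla^{s}u$ of the given $u$; the phrase ``rescaled sequences of solutions to the regularized Euler equations'' therefore refers to rescalings of $u_{\delta}$, not of $u$, and no mechanism is given to transfer the estimates (the vague ``preserves this estimate in integrated form on the regular set'' does not do this). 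A correct direct proof would have to run the blow-up on $u$ itself, using only its $K$-minimality, as \cite{AG} and \cite{S} do. What \emph{is} correct and coincides with the paper is your use of the maximum principle $\sup_{\Omega}|u|\leq\sup_{\Omega-D}|f|$: in the paper this (Proposition \ref{Proppartial}(c), via \eqref{mp1}--\eqref{mp8}) is exactly what furnishes the $L^{\infty}$, hence Morrey, hypothesis with $\alpha=1$ needed to invoke \cite{S}.
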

As outlined in, e.g. \cite{BF2}, the $\mu$-elliptic density $\Phi_{\mu}(t)$ acts as a very good candidate in order to approximate the $TV$-density $|P|$, $P\in\R^{nM}$. Particularly, the statement of Theorem \ref{partialregularity} clearly extends to the special integrand $F(P)=\Phi_{\mu}(|P|)$, i.e. it holds
\begin{Cor}\label{Corpartial}
Let us fix real numbers $\zeta,\mu>1$ and let us assume the validity of \eqref{L} as well as \eqref{funendlich}. Then, for each minimizer $u\in BV(\Omega)^M\cap L^{\zeta}(\Omega-D)^M$ of the functional
\begin{align*}
 \int\limits_{\Omega}{\Phi_{\mu}(|\n w|)}+\frac{\lambda}{\zeta}\int\limits_{\Omega-D}{|w-f|^{\zeta}dx},
\end{align*}
there exists an open subset $\Omega_0^u$ of $\Omega$ such that $u\in C^{1,\gamma}(\Omega_0^u)^M$ for all $0<\gamma\leq\frac{1}{2}$ and $\mathcal{L}^{n}(\Omega-\Omega_0^u)=0$.
\end{Cor}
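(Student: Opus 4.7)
The plan is to observe that Corollary \ref{Corpartial} is an immediate specialization of Theorem \ref{partialregularity} to the canonical $\mu$-elliptic prototype $F(P):=\Phi_\mu(|P|)$. First I would verify that this specific density fits into the framework of Theorem \ref{partialregularity}: as already noted in the paragraph following that theorem, $\Phi_\mu$ defined by \eqref{Phimu} is of class $C^2([0,\infty))$ and even satisfies the stronger $\mu$-ellipticity hypothesis $(1.4_\mu^*)$ from \cite{BF2} for every $\mu>1$, which in particular implies the conditions $(1.3^*)-(1.4^*)$ and $(1.6^*)$ required there. Consequently $F(P):=\Phi_\mu(|P|)$ has the radial structure \eqref{bs} and meets all the assumptions of Theorem \ref{partialregularity}.

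Next I would identify the functional appearing in Corollary \ref{Corpartial} with the functional $K$ from \eqref{K} associated to this choice of $F$. The first summand $\int_\Omega \Phi_\mu(|\nabla w|)$ is to be read in the sense of convex functions of a measure, and the remark immediately preceding \eqref{Phimu} gives the decomposition
\[
 \int_\Omega \Phi_\mu(|\nabla w|) = \int_\Omega \Phi_\mu(|\nabla^a w|)\,dx + \overline{c}_\mu\,|\nabla^s w|(\Omega),
\]
with $\overline{c}_\mu:=\lim_{t\to\infty}\Phi_\mu(t)/t\in(0,\infty)$, which is precisely the recession-function contribution in \eqref{K} for the radial integrand $F(P)=\Phi_\mu(|P|)$. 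Hence every minimizer $u\in BV(\Omega)^M\cap L^\zeta(\Omega-D)^M$ of the functional stated in the corollary coincides with a $K$-minimizer in the sense of \eqref{Kv}.

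Finally I would invoke Theorem \ref{partialregularity} directly, which produces an open set $\Omega_0^u\subset\Omega$ with $\mathcal{L}^n(\Omega-\Omega_0^u)=0$ and $u\in C^{1,\gamma}(\Omega_0^u)^M$ for every $0<\gamma\leq 1/2$. Since all the analytic work is already contained in Theorem \ref{partialregularity}, no substantive obstacle arises; the only technical point that requires care is the identification above of the relaxed functional in the corollary with the functional $K$, which is routine once one appeals to the theory of convex functions of a measure used throughout the paper.
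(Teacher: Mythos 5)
Your proposal is correct and follows essentially the same route as the paper: the paper likewise treats Corollary \ref{Corpartial} as an immediate specialization of Theorem \ref{partialregularity}, noting that $\Phi_{\mu}$ from \eqref{Phimu} is of class $C^2$ and satisfies $(1.3^*)-(1.4^*)$, $(1.6^*)$ (indeed the stronger $(1.4^*_{\mu})$) from \cite{BF2}, so that the relaxed functional with $F(P)=\Phi_{\mu}(|P|)$ is exactly the functional $K$ and the theorem applies verbatim. Your explicit identification of $\int_{\Omega}\Phi_{\mu}(|\nabla w|)$ with its absolutely continuous and singular parts is the same (routine) step the paper performs in the discussion preceding \eqref{Phimu}.
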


Now let us give some comments on the partial regularity statement from Theorem \ref{partialregularity} and Corollary \ref{Corpartial}, respectively:
\begin{Rem}
\begin{itemize}
 \item As we will see in the proof of Theorem \ref{partialregularity} in Section 5 we apply Corollary 3.3 in \cite{S} in order to derive almost everywhere $C^{1,\gamma}$-regularity of each generalized minimizer. In this context, the limit $\widetilde{\gamma}=\frac{1}{2}$ serves as an optimal choice where we refer to Section 5 for more details. 
\item Considering the case $\mathcal{L}^n(D)>0$ and assuming that $\text{Int}(D)\neq\emptyset$ ($\text{Int}(D)$ denoting the set of interior points of $D$) as well as that $F$ satisfies \eqref{v1s}--\eqref{v4}, we can show that for each $K$-minimizer $u$ there exists an open subset $G^u$ of $G:=\text{Int}(D)$ such that we have $u\in C^{1,\alpha}(G^u)$ for any $\alpha\in(0,1)$ and $\mathcal{L}^n(G-G^u)=0$. This statement is an immediate consequence of Theorem 1.1 in \cite{AG} and it is possible to drop the structure condition \eqref{bs} on $F$ in this situation.
\item Of course, the statement of Theorem \ref{partialregularity} remains valid if we replace \eqref{v4} through the much stronger condition of $\mu$-ellipticity
\begin{align*}
 \nu_5 (1+|P|)^{-\mu}|Q|^2\leq D^2F(P)(Q,Q)\leq \nu_4 (1+|P|)^{-1}|Q|^2
\end{align*}
with $\nu_5>0$, exponent $\mu>1$ and for all $P,Q\in\R^{nM}$. In this case, the function $\Phi$ from \eqref{bs} shall satisfy $(1.4^{*}_{\mu})$ from \cite{BF2} with the prescribed parameter $\mu>1$. Moreover we like to emphasize that partial regularity of any $K$-minimizer $u$ as stated in Theorem \ref{partialregularity} does not depend on the choice of the parameter $\mu>1$.
\end{itemize}
\end{Rem}

\begin{Rem}
 Following the lines of the proof of Theorem \ref{partialregularity} we can see that the statement extends to the case $\zeta=1$. Furthermore we can even drop the structure condition \eqref{bs} for our density $F$ and the $L^{\infty}$-assumption on our partial observation $f$ since the Morrey assumption \eqref{mo} is void in this case.
\end{Rem}

As a consequence of the previous statements the question arises if it is possible to establish full $C^{1,\alpha}$-interior regularity of $K$-minimizers under the particular assumptions mentioned before. Considering the case $\zeta=2$ and a density $F$ being $\mu$-elliptic for some $\mu\in(1,2)$, a satisfying answer is given for the scalar case $(M=1)$ in two dimensions $(n=2)$: in this situation it is shown that we can solve problem \eqref{vp1} in the Sobolev space $W^{1,1}(\Omega)$ without considering a suitable relaxed version in the space $BV(\Omega)$ and that the corresponding solution $u$ is unique (see \cite{BF1}, Theorem 1.3). Further, in \cite{BFT}, it was proved that $u\in C^{1,\alpha}(\Omega)$ for all $\alpha\in(0,1)$.\\
Moreover, in the forthcoming paper \cite{T}, it is shown that these regularity results extend to any dimension $n$ and arbitrary codimension $M$.\\
If we fix any number $\zeta>1, \zeta\neq2$, in the functional $I$ from \eqref{vp1}, it lies completely in the dark whether it is possible to show full $C^{1,\alpha}$-interior regularity for arbitrary $K$-minimizers.\\
\newline
Our paper is organized as follows: in Section 2 we state and prove a density result for functions of $BV$-type. In Section 3 we investigate the dual problem while in Section 4 we prove uniqueness of the dual solution and will establish the duality formula in addition. Finally, in Section 5 we study regularity properties of generalized minimizer. 

\end{section}

\begin{section}{A density result for $BV$-functions}

In this section we provide a density result for functions of $BV$-type. 

\begin{Lem}\label{BV_Approx}
Let $\Omega\subset\R^n$ denote a bounded Lipschitz domain and consider a measurable subset $D$ of $\Omega$ such that $\mathcal{L}^n(D)<\mathcal{L}^n(\Omega)$. Consider $u\in BV(\Omega)^M\cap L^q(\Omega-D)^M$ for some $q\in(\frac{n}{n-1},\infty)$. Then there exists a sequence $(u_k)\subset C^{\infty}(\overline{\Omega})^M$ such that (as $k\rightarrow\infty$)
\begin{enumerate}[(i)]
 \item $u_k\rightarrow u\,\,\text{in}\,\,L^1(\Omega)^M$,
\item $u_k\rightarrow u\,\,\text{in}\,\,L^q(\Omega-D)^M$,
\item $\displaystyle{\int\limits_{\Omega}{\sqrt{1+|\nabla u_k|^2}dx}\rightarrow\int\limits_{\Omega}{\sqrt{1+|\nabla u|^2}}}$.
\end{enumerate}
\end{Lem}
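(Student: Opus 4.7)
My plan is to combine the classical Anzellotti--Giaquinta strict-approximation theorem for $BV$-functions (which by itself yields (i) and (iii)) with a preliminary truncation that produces the additional $L^q$-control asserted in (ii). A direct mollification of $u$ does not suffice, because for a point $x\in\Omega-D$ lying within the mollification radius of $\partial D\cap\Omega$, the convolution averages over values of $u$ inside $D$, where $u$ is only known to be in $L^{n/(n-1)}$ via the $BV$-embedding and \emph{not} in $L^q$.

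As a first step I would truncate: for $L>0$ let $T_L:\R^M\to\R^M$ denote the $1$-Lipschitz radial retraction onto $\overline{B_L(0)}$, and set $u^{(L)}:=T_L\circ u\in BV(\Omega)^M\cap L^\infty(\Omega)^M$. Since $|u^{(L)}|\leq|u|$ and $u^{(L)}\to u$ pointwise a.e., dominated convergence immediately gives $u^{(L)}\to u$ in $L^1(\Omega)^M\cap L^q(\Omega-D)^M$ as $L\to\infty$. The Vol'pert chain rule in $BV$ together with the explicit form of $DT_L$ yields $\nabla^a u^{(L)}=DT_L(\widetilde u)\nabla^a u$ and an analogous formula for the Cantor and jump parts of $D u^{(L)}$. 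Since $|DT_L|\leq 1$ everywhere and $DT_L(x)=I$ for $|x|\leq L$, dominated convergence on the absolutely continuous part and on the vector measures $D^c u$ and $D^j u$ (where the approximate value $\widetilde u$, respectively the traces $u^\pm$, are finite a.e.) produces the strict area convergence
\[
\int_\Omega\sqrt{1+|\nabla u^{(L)}|^2}\;\longrightarrow\;\int_\Omega\sqrt{1+|\nabla u|^2}\qquad(L\to\infty).
\]

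Next, for each fixed $L$ I would invoke the classical Anzellotti--Giaquinta construction on the Lipschitz domain $\Omega$: a smooth partition of unity subordinate to a finite open cover of $\overline\Omega$, inward translation near $\partial\Omega$ using the local Lipschitz graph structure, and local mollification with radii $\epsilon_{j,k}\downarrow 0$. This produces $v^{(L)}_k\in C^\infty(\overline\Omega)^M$ with $v^{(L)}_k\to u^{(L)}$ in $L^1(\Omega)^M$ and strict area convergence as $k\to\infty$. Because $u^{(L)}\in L^\infty(\Omega)^M$, mollification preserves $\|\cdot\|_\infty$, hence $v^{(L)}_k$ is uniformly bounded in $L^\infty(\Omega)$ and (by interpolation with the $L^1$-convergence) one additionally obtains $v^{(L)}_k\to u^{(L)}$ in $L^q(\Omega)^M$, in particular in $L^q(\Omega-D)^M$. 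A standard diagonal extraction then yields $L_k\to\infty$ such that $u_k:=v^{(L_k)}_k$ satisfies (i), (ii) and (iii) simultaneously.

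The main obstacle is the first step: verifying strict convergence of the area functional under the radial retraction $T_L$. One has to keep track of the absolutely continuous, Cantor and jump components of $Du$ separately, confirm that on each piece the measures $Du^{(L)}$ converge in total variation to $Du$ (using that $\widetilde u$ and $u^\pm$ are finite almost everywhere with respect to $|D^c u|$ and $\mathcal{H}^{n-1}\llcorner J_u$ respectively), and then recombine to pass to the area functional whose recession function is precisely $|\cdot|$. Once this monotone/dominated-convergence argument is carried out carefully, the remainder of the proof is a routine assembly of standard mollification estimates.
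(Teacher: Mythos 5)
Your proposal is correct in outline and shares the paper's key idea for (ii) — truncate first by the $1$-Lipschitz radial retraction so that the approximants are uniformly bounded, which turns $L^1$-convergence into $L^q$-convergence — but it implements the smoothing step by a genuinely different route. The paper does not use the partition-of-unity/inward-translation machinery on $\Omega$: it extends $u$ across $\partial\Omega$ by a $W^{1,1}$ function on $\widetilde{\Omega}-\overline{\Omega}$ with matching trace (Giusti's Theorem 2.16), so that the extended measure $\nabla\widetilde{u}$ charges no mass on $\partial\Omega$; after truncating it uses a single global mollification, and proves the area-strict convergence of the mollified functions by hand via the conjugate-function representation of $\int\sqrt{1+|\nabla w|^2}$ (Demengel--Temam, Bildhauer's Lemma B.1), obtaining $\limsup_{\rho\downarrow0}\int_{\Omega}g(\nabla(\widetilde{u}_{m_k})_{\rho})\leq\int_{\overline{\Omega}}g(\nabla\widetilde{u}_{m_k})=\int_{\Omega}g(\nabla\widetilde{u}_{m_k})$, which together with lower semicontinuity gives (iii). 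Your route avoids the extension but must carry the boundary bookkeeping inside the Anzellotti--Giaquinta-type construction; the one point you should not treat as a pure citation is that the classical statements give convergence of total variations, whereas here you need \emph{area}-strict convergence in $C^{\infty}(\overline{\Omega})$ — this is exactly the reason the authors redo their earlier Lemma 2.2 of \cite{FT} instead of quoting it, so you would have to check that your local mollification/translation estimates are run for the area integrand (they can be, with the same kind of $\Omega_{\rho}$-versus-$\overline{\Omega}$ comparison). Your construction does preserve the $L^{\infty}$-bound, so the interpolation giving (ii) is fine, and the diagonal extraction is standard. Finally, your first step is heavier than necessary: rather than the Vol'pert chain rule with separate treatment of the absolutely continuous, Cantor and jump parts, it suffices to note $|\nabla(T_L\circ u)|\leq|\nabla u|$ as measures (Lipschitz composition), hence $\sqrt{1+|\nabla u^{(L)}|^2}(\Omega)\leq\sqrt{1+|\nabla u|^2}(\Omega)$, and to squeeze with the $L^1$-lower semicontinuity of the area functional — this two-line argument is what the paper uses.
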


\begin{Rem}
A proof of Lemma \ref{BV_Approx} has already been established in \cite{FT}, Lemma 2.2, where in place of (iii) it was proven the validity of ($|\n u|(\Omega)$ denoting the total variation of the tensor-valued signed Radon measure $\n u$ on $\Omega$)
\begin{align*}
 (iii)^*\quad \int\limits_{\Omega}{|\n u_k|dx}\rightarrow\int\limits_{\Omega}{|\n u|}=|\n u|(\Omega)\quad\text{as}\,\,k\rightarrow\infty.
\end{align*}
Actually we need (iii) in order to apply the continuity theorem of Reshetnyak (see, e.g., \cite{AG},  Proposition 2.2) which implies continuity of the functional ($w\in BV(\Omega)^M$)
\[
 \widetilde{K}[w]=\int\limits_{\Omega}{F(\nabla^a w)dx}+\int\limits_{\Omega}{F^{\infty}\bigg(\frac{\nabla ^s w}{|\nabla ^s w|}\bigg)d|\nabla^s w|}
\]
w.r.t. the convergences stated in (i) and (iii). For that reason we follow the hint of \cite{FT} (see Remark 2.3 in this reference) and adopt the procedure that has been used therein (see \cite{FT}, proof of Lemma 2.2) in order to verify that the construced sequence $(u_m)\subset C^{\infty}(\overline{\Omega})$ in fact satisfies (iii).
\end{Rem}

\begin{proof}[Proof of Lemma \ref{BV_Approx}]
Let us choose a smooth bounded domain $\widetilde{\Omega}$ such that $\Omega\Subset\widetilde{\Omega}$. Given a function $w\in BV(\widetilde{\Omega})^M$ we first recall that in agreement with the general concept of applying a convex function to a measure (see, e.g., \cite{DT}) the quantity $\int\limits_{\Omega}{\sqrt{1+|\n w|^2}}$ is defined as follows:
\[
 \sqrt{1+|\n w|^2}(B):=\int\limits_{B}{\sqrt{1+|\n w|^2}}:=\int\limits_{B}{\sqrt{1+|\n^a w|^2}dx}+|\n^s w|(B),\quad B\in\mathcal{B}(\widetilde{\Omega}).
\]
Now let $u_0\in L^1(\partial\Omega)^M$ denote the trace of the given function $u\in BV(\Omega)^M\cap L^q(\Omega-D)^M$ whose properties are summarized in e.g. \cite{AFP}, Theorem 3.87, p.180/181. With $\widetilde{\Omega}$ as above we let $u_0:=0$ on $\partial\widetilde{\Omega}$, thus $u_0\in L^1(\partial G)^M$ where $G:=\widetilde{\Omega}-\overline{\Omega}$. Referring to \cite{Giu}, Theorem 2.16, p.39, we can find $v\in W^{1,1}(G)^M$ having trace $u_0$ on $\partial G$ and such that
\begin{align}
\label{bv1} \|v\|_{W^{1,1}(G)}\leq c\|u_0\|_{L^1(\partial G)} 
\end{align}
with $c$ depending on $\partial G$ but independent of $u_0$ and $v$. We then let 
\begin{align*} 
 \widetilde{u}:=
\left\{
  \begin{array}{ll}
   u, &\quad\text{on}\,\,\Omega \\
   v, &\quad\text{on}\,\,\widetilde{\Omega}-\overline{\Omega}
  \end{array}
\right.
\end{align*}
and observe $\widetilde{u}\in BV(\widetilde{\Omega})^M$, which follows from \cite{AFP}, Corollary 3.89, p.183, and the fact that \eqref{bv1} implies $v\in BV(G)^M$. Viewing $\n u$ (resp. $\n v$) as measures on $\widetilde{\Omega}$ concentrated on $\Omega$ (resp. $\widetilde{\Omega}-\overline{\Omega}$) and recalling the definition of $v$ we further deduce from the above reference the identity
\begin{align}
 \label{bv2} \n\widetilde{u}=\n u+\n v
\end{align}
as measures on $\widetilde{\Omega}$. For $m\in\N$ we now let $\Phi_m:\R^M\rightarrow\R^M$ given by
\begin{align*}
  \Phi_m(y) :=
\left\{
  \begin{array}{ll}
    y, & |y|\leq m \\
    m\frac{y}{|y|}, & |y|\geq m
  \end{array}
\right.
\end{align*}
and observe for the sequence $\widetilde{u}_m:=\Phi_m\circ\widetilde{u}$ (compare the first part of the proof of Theorem 3.96 on p.189 in \cite{AFP})
\begin{align}
 \label{bv3} \widetilde{u}_m\in BV(\widetilde{\Omega})^M,\quad |\n\widetilde{u}_m|\leq\text{Lip}(\Phi_m)|\n\widetilde{u}|=|\n\widetilde{u}|,
\end{align}
which means $|\n\widetilde{u}_m|(E)\leq|\n\widetilde{u}|(E)$ for any Borel set $E\subset\widetilde{\Omega}$. In particular, from $|\n\widetilde{u}|(\partial\Omega)=0$ (recall \eqref{bv2}) it follows that
\begin{align}
 \label{bv4} |\n\widetilde{u}_m|(\partial\Omega)=0,\quad m\in\N.
\end{align}
As a consequence of \eqref{bv3} we further obtain (compare \cite{AG}, Proposition 2.1)
\begin{align}
\label{bv41} \sqrt{1+|\n\widetilde{u}_m|^2}(E)\leq \sqrt{1+|\n\widetilde{u}|^2}(E) 
\end{align}
for any Borel set $E\subset\widetilde{\Omega}$ while on the other hand we get by using \eqref{bv4}
\begin{align}
 \label{bv42} \sqrt{1+|\n\widetilde{u}_m|^2}(\partial\Omega)=0,\quad m\in\N.
\end{align}
Applying Lebesgue's theorem on dominated convergence it follows (as $m\rightarrow\infty$)
\begin{align}
 \label{bv5} &\widetilde{u}_m\rightarrow\widetilde{u}\quad\text{in}\,\, L^1(\widetilde{\Omega})^M,\\
\label{bv6} &\widetilde{u}_m\rightarrow u\quad\text{in}\,\, L^q(\Omega-D)^M,
\end{align}
and \eqref{bv5} combined with lower semicontinuity implies
\begin{align*}
\sqrt{1+|\n\widetilde{u}|^2}(\widetilde{\Omega})\leq\liminf\limits_{m\rightarrow\infty}{\sqrt{1+|\n\widetilde{u}_m|^2}(\widetilde{\Omega})}
\end{align*}
From \eqref{bv41} we get
\begin{align*}
\sqrt{1+|\n\widetilde{u}_m|^2}(\widetilde{\Omega})\leq \sqrt{1+|\n\widetilde{u}|^2}(\widetilde{\Omega}) 
\end{align*}
thus
\begin{align}
 \label{bv7} \sqrt{1+|\n\widetilde{u}_m|^2}(\widetilde{\Omega})\rightarrow \sqrt{1+|\n\widetilde{u}|^2}(\widetilde{\Omega}) ,\quad m\rightarrow\infty.
\end{align}
Clearly we can replace $\widetilde{\Omega}$ in \eqref{bv7} by the domain $\Omega$, so that in combination with \eqref{bv5} and \eqref{bv6} it holds for a subsequence (recall that by \eqref{bv2} $|\n\widetilde{u}|(\Omega)=|\n u|(\Omega)$)
\begin{align}
 \label{bv8} \begin{split} 
&\|\widetilde{u}_{m_k}-u\|_{L^1(\Omega)}+\|\widetilde{u}_{m_k}-u\|_{L^q(\Omega-D)}\\
&+\bigg|\sqrt{1+|\n\widetilde{u}_{m_k}|^2}(\Omega)-\sqrt{1+|\n u|^2}(\Omega)\bigg|\leq\frac{1}{k},\quad k\in\N.
\end{split}
\end{align}
In a next step we consider a radius $\rho>0$ (being sufficently small) and consider the mollification $(\widetilde{u}_{m_k})_{\rho}=\widetilde{u}_{m_k}\ast\eta_{\rho}$ where $\eta$ denotes a smoothing kernel. By quoting well-known convergence properties of the mollification (see, e.g., \cite{Ad}, Lemma 2.18, p.29/30) we may directly infer (note that we have the convergence $(\widetilde{u}_{m_k})_{\rho}\rightarrow\widetilde{u}_{m_k}$ in $L^p_{\loc}(\widetilde{\Omega})^M$ as $\rho\downarrow0$ for any $p\in[1,\infty)$)
\begin{align}
 \label{bv9} \|(\widetilde{u}_{m_k})_{\rho}-\widetilde{u}_{m_k}\|_{L^1(\Omega)}+\|(\widetilde{u}_{m_k})_{\rho}-\widetilde{u}_{m_k}\|_{L^q(\Omega-D)}\rightarrow0\quad\text{as}\,\,\rho\downarrow0.
\end{align}
In order to show the required convergence for the modification of the total variation we adopt ideas as applied in \cite{Giu}, Proposition 1.15, p.12 and adapt the procedure as carried out in \cite{Bi1}, Proof of Lemma B.1, p.185-188, i.e. we consider the function 
\begin{align*}
  g(Z)=\sqrt{1+|Z|^2}-1,\quad Z\in\R^{nM}
\end{align*}
and notice that its conjugate function $g^*$ is given by
\begin{align*}
 g^*(Q)=
\left\{
  \begin{array}{ll}
   +\infty, & \text{if}\,\,|Q|>1,\\
  1-\sqrt{1-|Q|^2}, & \text{if}\,\,|Q|\leq1.
  \end{array}
\right.
\end{align*}
for any $Q\in\R^{nM}$. Further, we note that $g^*$ is convex satisfying $g^*(0)=0$ as well as $g^*\geq0$.\\
Following \cite{DT}, Definition 1.2,  we can state the following representation formula for the measure $\int\limits_{\widetilde{\Omega}}{g(\n w)}$ with $w\in BV(\widetilde{\Omega})^M$ (compare also (3) in \cite{Bi1} on p.186)
\begin{align*}
 \int\limits_{\widetilde{\Omega}}{g(\n w)}=\sup\limits_{\varkappa\in C^{\infty}_0(\widetilde{\Omega})^{nM},|\varkappa|\leq1}\bigg[-\int\limits_{\widetilde{\Omega}}{w\,\text{div}\varkappa\,dx}-\int\limits_{\widetilde{\Omega}}{g^*(\varkappa)dx}\bigg].
\end{align*}
We like to remark that the above representation formula extends to any Borel set $E\subset\widetilde{\Omega}$ by letting (compare \cite{DT},Definition 1.2 again)
\[
\int\limits_{E}{g(\n w)}=\sup\limits_{\varkappa\in C^{\infty}_0(\widetilde{\Omega})^{nM},|\varkappa|\leq1}\bigg[\int\limits_{E}{\varkappa:\n w}-\int\limits_{\widetilde{\Omega}}{g^*(\varkappa)dx}\bigg].
\]
By lower semicontinuity (recall \eqref{bv9}) we have
\begin{align}
 \label{bv10} \int\limits_{\Omega}{g(\n\widetilde{u}_{m_k})}\leq\liminf\limits_{\rho\downarrow0}{\int\limits_{\Omega}{g(\n(\widetilde{u}_{m_k})_{\rho})}}.
\end{align}
For verifying the reverse inequality we fix $\varkappa\in C^{\infty}_0(\Omega)^{nM}$ satisfying $|\varkappa|\leq1$ and get (we identify $\varkappa$ with its zero-extension to $\R^n$)
\begin{align}
\begin{split}\label{bv11}
&-\int\limits_{\widetilde{\Omega}}{(\widetilde{u}_{m_k})_{\rho}\,\text{div}\varkappa\,dx}-\int\limits_{\widetilde{\Omega}}{g^*(\varkappa)dx}\\
&=-\int\limits_{\widetilde{\Omega}}{\widetilde{u}_{m_k}\,\text{div}(\varkappa)_{\rho}\,dx}-\int\limits_{\widetilde{\Omega}}{g^*(\varkappa)dx}.
\end{split}
\end{align}
Besides, we obtain (recall $\varkappa\equiv0$ on $\R^n-\Omega$ and $g^*(0)=0$)
\begin{align}
\begin{split}\label{bv12}
 -\int\limits_{\widetilde{\Omega}}{g^*(\varkappa)dx}&=-\int\limits_{\R^n}{g^*(\varkappa)dx}\\
&=-\int\limits_{\R^n}{g^*((\varkappa)_{\rho})dx}+\bigg\{\int\limits_{\R^n}{[g^*((\varkappa)_{\rho})-g^*(\varkappa)]dx}\bigg\}.
\end{split}
\end{align}
In order to handle the second integral on the r.h.s. of \eqref{bv12} we use Jensen's inequality that gives
\begin{align}
 \label{bv13} g^*((\varkappa)_{\rho})\leq g^*(\varkappa)_{\rho}.
\end{align}
By means of \eqref{bv13} and due to Fubini's theorem we may derive 
\begin{align}
\label{bv14} \int\limits_{\R^n}{[g^*((\varkappa)_{\rho})-g^*(\varkappa)]dx}\leq 0.
\end{align}
As a result of \eqref{bv12}--\eqref{bv14}, \eqref{bv11} turns into 
\begin{align}
\begin{split}\label{bv15}
&-\int\limits_{\widetilde{\Omega}}{(\widetilde{u}_{m_k})_{\rho}\,\text{div}\varkappa\,dx}-\int\limits_{\widetilde{\Omega}}{g^*(\varkappa)dx}\\
&\leq -\int\limits_{\widetilde{\Omega}}{\widetilde{u}_{m_k}\,\text{div}(\varkappa)_{\rho}\,dx}-\int\limits_{\widetilde{\Omega}}{g^*((\varkappa)_{\rho})dx}.
\end{split}
\end{align}
Taking standard properties of the mollification into account (see, e.g.,\cite{Giu}, p.11) we get $|(\varkappa)_{\rho}|\leq1$ since $|\varkappa|\leq1$  as well as $\text{spt}\,(\varkappa)_{\rho}\subset\Omega_{\rho}:=\{x;\text{dist}(x,\Omega)\leq\rho\}$ since $\text{spt}\,\varkappa\subset\Omega$. Consequently, \eqref{bv15} turns into
\begin{align*}
-\int\limits_{\widetilde{\Omega}}{(\widetilde{u}_{m_k})_{\rho}\,\text{div}\varkappa\,dx}-\int\limits_{\widetilde{\Omega}}{g^*(\varkappa)dx}\leq \int\limits_{\Omega_{\rho}}{g(\n\widetilde{u}_{m_k})}.
\end{align*}
At this point we take the supremum over all such $\varkappa$ and this results in
\begin{align*}
\int\limits_{\Omega}{g(\n(\widetilde{u}_{m_k})_{\rho})}\leq  \int\limits_{\Omega_{\rho}}{g(\n\widetilde{u}_{m_k})}.
\end{align*}
Thus, we obtain
\begin{align*}
 \limsup\limits_{\rho\downarrow0}{\int\limits_{\Omega}{g(\n(\widetilde{u}_{m_k})_{\rho})}}\leq \lim\limits_{\rho\downarrow0}{\int\limits_{\Omega_{\rho}}{g(\n\widetilde{u}_{m_k})}}=\int\limits_{\overline{\Omega}}{g(\n\widetilde{u}_{m_k})}.
\end{align*}
On account of \eqref{bv42} we may conclude
\begin{align*}
 \int\limits_{\partial\Omega}{g(\n\widetilde{u}_{m_k})}=0
\end{align*}
that implies
\begin{align}
 \label{bv16} \limsup\limits_{\rho\downarrow0}{\int\limits_{\Omega}{g(\n(\widetilde{u}_{m_k})_{\rho})}}\leq \int\limits_{\Omega}{g(\n\widetilde{u}_{m_k})}
\end{align}
Combining \eqref{bv16} with \eqref{bv10} we have
\begin{align} 
 \lim\limits_{\rho\downarrow0}{\int\limits_{\Omega}{g(\n(\widetilde{u}_{m_k})_{\rho})}}=\int\limits_{\Omega}{g(\n\widetilde{u}_{m_k})}
\end{align}
which clearly implies
\begin{align*}
 \lim\limits_{\rho\downarrow0}{\sqrt{1+|\n(\widetilde{u}_{m_k})_{\rho}|^2}(\Omega)}=\sqrt{1+|\n\widetilde{u}_{m_k}|^2}(\Omega).
\end{align*}
Passing now to a suitable subsequence of radii $\rho_k$ ($k\in\N$) we can arrange
\begin{align}
 \begin{split}\label{bv17}
&\|\widetilde{u}_{m_k}-(\widetilde{u}_{m_k})_{\rho_k}\|_{L^1(\Omega)}+\|\widetilde{u}_{m_k}-(\widetilde{u}_{m_k})_{\rho_k}\|_{L^q(\Omega-D)}\\
&+\bigg|\sqrt{1+|\n\widetilde{u}_{m_k}|^2}(\Omega)-\sqrt{1+|\n(\widetilde{u}_{m_k})_{\rho_k}|^2}(\Omega)\bigg|\\
&\leq\frac{1}{k}.
\end{split}
\end{align}                                                                                                                                                                                                                                                                                                                                                                                                                                                                                                                                                        
Putting together \eqref{bv8} and \eqref{bv17} we see that the sequence $u_k:=(\widetilde{u}_{m_k})_{\rho_k}$ is of class $C^{\infty}(\overline{\Omega})$ and has the desired properties. 
\end{proof}

\end{section}

\begin{section}{Dual solutions. Proof of Theorem \ref{dual}}
Let us assume the validity of the hypotheses of Theorem \ref{dual} and note that a proof of assertion (a) can probably be deduced by quoting \cite{FS}, Theorem 1.2.1, p.15/16 or \cite{ET}, Proposition 2.3, Chapter III, p.52.  As in \cite{BF2}, proof of Theorem 1.4 or \cite{FT}, proof of Theorem 1.2, respectively, we prefer to prove Theorem \ref{dual} in a more constructive way based on an approximation of our original variational problem through a sequence of more regular problems having smooth solutions with appropriate convergence properties. In particular, this sequence might be of interest for numerical computations.\\
To be more precise, for fixed $\delta\in(0,1]$ we look at the following problem
\begin{align}
\label{Id}\begin{split}  I_{\delta}[w]:=&\intop_{\Omega}F_{\delta}(\nabla w)\mbox{ d}x+\frac{\lambda}{\zeta}\intop_{\Omega-D}|w-f|^{\zeta}dx\rightarrow\text{min}\\
&\text{in}\,\,W^{1,2}(\Omega)^M\cap L^{\zeta}(\Omega-D)^M
\end{split}
\end{align}
where 
\begin{align*}
 F_{\delta}(Z):=\frac{\delta}{2}|Z|^{2}+F(Z),\quad Z\in\R^{nM}.
\end{align*}
Note that by the continuity of Sobolev's embedding $W^{1,2}(\Omega)^M\hookrightarrow L^{\frac{2n}{n-2}}(\Omega)^M$ (see, e.g., \cite{Ad}, Theorem 5.4, p.97/98) the requirement \grqq $u\in L^{\zeta}(\Omega-D)^M$\grqq\,\,acts as an additional constraint if $\zeta>\frac{2n}{n-2}$. Considering the special case $n=2$ with arbitrary codimension $M\geq1$, Sobolev's embedding theorem directly implies that we can discuss problem \eqref{Id} in the whole space $W^{1,2}(\Omega)^M$ for any choice of $\zeta>1$.\\
As a matter of fact, problem \eqref{Id} has at most one solution $\ud\in W^{1,2}(\Omega)^M\cap L^{\zeta}(\Omega-D)^M$: assuming that $u_1, u_2$ are solutions problem \eqref{Id} it follows $\n u_1=\n u_2$ on $\Omega$ and as a consequence it holds $u_1=u_2$ by virtue of \eqref{L}.\\
Next, with $\delta\in(0,1]$ being fixed, we denote by $(u_m)$ an $I_{\delta}$-minimizing sequence from $W^{1,2}(\Omega)^M\cap L^{\zeta}(\Omega-D)^M$ for which it holds
\begin{align*}
 &\sup\limits_{m}{\|\n u_m\|_{L^2(\Omega)}}\leq c(\delta)<\infty,\\
&\sup\limits_{m}{\|\n u_m\|_{L^1(\Omega)}}<\infty,\\
&\sup\limits_{m}{\|u_m-f|\|_{L^{\zeta}(\Omega-D)}}<\infty
\end{align*}
where the linear growth of $F$ has been used.\\
As stated on p.380 of \cite{AFP} we have the following variant of Poincar\'{e}'s inequality (recall $\mathcal{L}^n(\Omega-D)>0$ by \eqref{L})
\begin{align}
\label{poincare} \|v-(v)_{\Omega-D}\|_{L^q(\Omega)}\leq c\|\n v\|_{L^q(\Omega)}
\end{align}
valid for $v\in W^{1,q}(\Omega)^M$, $1\leq q<\infty$, where $c$ denotes a positive constant independent of $v$. It is to mention that by standard approximation (see, e.g., \cite{AFP}, Theorem 3.9, p.122) the above inequality \eqref{poincare} remains valid for functions $v\in BV(\Omega)^M$. The quadratic variant of inequality \eqref{poincare} directly implies
\begin{align*}
 \sup\limits_{m}{\|u_m\|_{L^2(\Omega)}}<\infty,
\end{align*}
thus, we get $u_m\rightharpoondown:\ud$ in $W^{1,2}(\Omega)^M$ at least for a suitable subsequence. Besides we obtain $u_m\rightharpoondown\ud$ in $L^{\zeta}(\Omega-D)^M$ after passing to another subsequence and the weak $L^{\zeta}(\Omega-D)^M$-convergence yields
\begin{align}
\label{uhstlp1} \int\limits_{\Omega-D}{|\ud-f|^{\zeta}dx}\leq\liminf\limits_{m\rightarrow\infty}{\int\limits_{\Omega-D}{|u_m-f|^{\zeta}dx}},
\end{align}
i.e. $\ud\in W^{1,2}(\Omega)^M\cap L^{\zeta}(\Omega-D)^M$ and $I_{\delta}[\ud]$ is well-defined.\\
Quoting standard results on lower semicontinuity we have
\begin{align}
 \label{uhstw121} \int\limits_{\Omega}{F_{\delta}(\n\ud)dx}\leq \liminf\limits_{m\rightarrow\infty}{\int\limits_{\Omega}{F_{\delta}(\n u_m)dx}}
\end{align}
and combining \eqref{uhstlp1} with \eqref{uhstw121} it follows
\begin{align*}
 I_{\delta}[\ud]&\leq\liminf\limits_{m\rightarrow\infty}{\int\limits_{\Omega}{F_{\delta}(\n u_m)dx}}+\liminf\limits_{m\rightarrow\infty}{\frac{\lambda}{\zeta}\int\limits_{\Omega-D}{|u_m-f|^{\zeta}dx}}\\
&\leq\liminf\limits_{m\rightarrow\infty}{I_{\delta}[u_m]}=\inf\limits_{W^{1,2}(\Omega)^M\cap L^{\zeta}(\Omega-D)^M}{I_{\delta}}
\end{align*}
implying that $\ud$ solves problem \eqref{Id}.\\
Observing that we have the uniform estimate $I_{\delta}[\ud]\leq I_{\delta}[0]=I[0]$ and using the linear growth of $F$ in addition we may derive
\begin{align}
\label{uniformdelta} \begin{split} 
&\sup\limits_{\delta}{\|\n\ud\|_{L^1(\Omega)}}<\infty,\\
&\sup\limits_{\delta}{\|\ud-f\|_{L^{\zeta}(\Omega-D)}}<\infty,\\
&\sup\limits_{\delta}{\,\,\delta\int\limits_{\Omega}{|\n\ud|^2dx}}<\infty.                 
\end{split}
\end{align}
Applying Poincar\'{e}'s inequality \eqref{poincare} (choose $q=1$) we additionally get
\begin{align}
 \label{L1uniform} \sup\limits_{\delta}{\|\ud\|_{L^1(\Omega)}}<\infty
\end{align}
and combining \eqref{uniformdelta} and \eqref{L1uniform} it therefore follows
\begin{align}
 \label{w11uniform}  \sup\limits_{\delta}{\|\ud\|_{W^{1,1}(\Omega)}}<\infty.
\end{align}
In accordance with \eqref{w11uniform} and \eqref{uniformdelta} it holds (at least for an appropriate subsequence $\delta\downarrow0$)
\begin{align*}
&\ud\rightarrow:\overline{u}\quad\text{in}\,\,L^1(\Omega)^M\,\,\text{and a.e.},\\
&\ud\rightharpoondown\overline{u}\quad\text{in}\,\,L^{\zeta}(\Omega-D)^M
\end{align*}
for a function $\overline{u}\in BV(\Omega)^M$ (compare, e.g., \cite{AFP}, Theorem 3.23, p.132). In addition the weak $L^{\zeta}(\Omega-D)^M$ convergence yields
\begin{align*}
 \int\limits_{\Omega-D}{|\overline{u}-f|^{\zeta}dx}\leq\liminf\limits_{\delta\downarrow0}{\int\limits_{\Omega-D}{|\ud-f|^{\zeta}dx}},
\end{align*}
i.e. our limit function $\overline{u}$ belongs to the space $BV(\Omega)^M\cap L^{\zeta}(\Omega-D)^M$.\\
Next we set
\begin{align}
 \label{dv2} \tau_{\delta}:=DF(\n\ud)\quad\text{and}\quad\sigma_{\delta}:=DF_{\delta}(\n\ud)=\delta\n\ud+\tau_{\delta}
\end{align}
and note that \eqref{uniformdelta} gives
\begin{equation}
\label{dv3} \delta\nabla u_{\delta}\rightarrow0\mbox{ in }L^{2}(\Omega)^{nM}\mbox{ as }\delta\rightarrow0.
\end{equation}
Moreover, \eqref{v2} shows that $\tau_{\delta}$ is uniformly bounded w.r.t. $\delta$, i.e.
\begin{align}
 \label{dv4} \sup\limits_{\delta}{||\tau_{\delta}||_{L^{\infty}(\Omega)}}<\infty.
\end{align}
So, by fixing a suitable sequence $\delta\downarrow0$, we may derive
\begin{equation}
\label{dv5} \sigma_{\delta}\rightharpoondown:\sigma\mbox{ in }L^{2}(\Omega)^{nM}\mbox{ and }\tau_{\delta}\overset{*}{\rightharpoondown}:\tau\mbox{ in }L^{\infty}(\Omega)^{nM}.
\end{equation}
and by combining \eqref{dv3} with \eqref{dv5} it follows $\sigma=\tau$ on $\Omega$.\\
As in \cite{BF2}, proof of Theorem 1.4, or \cite{FT}, proof of Theorem 1.2, respectively, it will turn out that $\sigma$ is a solution of the dual problem. To justify this assertion we state that by the $I_{\delta}$-minimality of $\ud$ we get for all $\varphi\in W^{1,2}(\Omega)^M\cap L^{\zeta}(\Omega-D)^M$
\begin{align}
 \label{euler}
\begin{split} 0&=\frac{d}{dt|_0}I_{\delta}[\ud+t\varphi]\\
&=\int\limits_{\Omega}{\tau_{\delta}:\n\varphi dx}+\delta\int\limits_{\Omega}{\n u_{\delta}:\n\varphi dx}+\lambda\int\limits_{\Omega-D}{|u_{\delta}-f|^{\zeta-2}(u_{\delta}-f)\cdot\varphi dx}.
\end{split}
\end{align}
An application of the identity $F(\nabla u_{\delta})=\tau_{\delta}:\nabla u_{\delta}-F^*(\tau_{\delta})$ (see, e.g., \cite{ET}, Proposition 5.1, p.21) then leads to
\begin{align*}
I_{\delta}[u_{\delta}]=\frac{\delta}{2}\intop_{\Omega}|\nabla u_{\delta}|^{2}dx+\intop_{\Omega}[\tau_{\delta}:\nabla u_{\delta}-F^{*}(\tau_{\delta})]dx+\frac{\lambda}{\zeta}\intop_{\Omega-D}|u_{\delta}-f|^{\zeta}dx.
\end{align*}
Next we observe that $\varphi=u_{\delta}$ serves as an admissible choice in \eqref{euler} and it follows
\begin{equation}\label{dv6}
\begin{aligned}
I_{\delta}[\ud]=&-\frac{\delta}{2}\int\limits_{\Omega}{|\n\ud|^2dx}+\int\limits_{\Omega}{(-F^{*}(\tau_{\delta}))dx}+\frac{\lambda}{\zeta}\int\limits_{\Omega-D}{|\ud-f|^{\zeta}dx}\\
&-\lambda\int\limits_{\Omega-D}{|\ud-f|^{\zeta-2}(\ud-f)\cdot\ud\,dx}.
\end{aligned}
\end{equation}
Now we let $v\in W^{1,1}(\Omega)^M\cap L^{\zeta}(\Omega-D)^M$. Recalling \eqref{Ineu} and the definition of the dual functional $R$ we can state 
\begin{align} \label{dv7} \begin{split}
I[v]&=\sup\limits_{\varkappa\in L^{\infty}(\Omega)^{nM}}{l(v,\varkappa)}\\
&\geq l(v,\rho)\geq\inf\limits_{w\in W^{1,1}(\Omega)^M\cap L^{\zeta}(\Omega-D)^M}{l(w,\rho)}=R[\rho]
\end{split}
\end{align}
for any $\rho\in L^{\infty}(\Omega)^{nM}$, hence, we conclude
\begin{align*}
 \inf\limits_{W^{1,1}(\Omega)^M\cap L^{\zeta}(\Omega-D)^M}{I}\geq\sup\limits_{L^{\infty}(\Omega)^{nM}}{R}.
\end{align*}
Besides we observe $\inf\limits_{v\in W^{1,1}(\Omega)^M\cap L^{\zeta}(\Omega-D)^M}{I[v]}\leq I[u_{\delta}]\leq I_{\delta}[u_{\delta}]$ and by means of \eqref{dv6} as well as \eqref{dv7} we obtain
\begin{align}
\label{dv8}
\begin{split}
&\sup\limits_{\rho\in L^{\infty}(\Omega)^{nM}}{R[\rho]}\\
&\leq\inf\limits_{v\in W^{1,1}(\Omega)^M\cap L^{\zeta}(\Omega-D)^M}{I[v]}\\
&\leq -\frac{\delta}{2}\int\limits_{\Omega}{|\n\ud|^2 dx}+\int\limits_{\Omega}{(-F^*(\tau_{\delta}))dx}\\
&+\frac{\lambda}{\zeta}\int\limits_{\Omega-D}{|u_{\delta}-f|^{\zeta}dx}-\lambda\int\limits_{\Omega-D}{|\ud-f|^{\zeta-2}(\ud-f)\cdot\ud\,dx}\\
&\leq \int\limits_{\Omega}{(-F^*(\tau_{\delta}))dx}-\bigg(\lambda-\frac{\lambda}{\zeta}\bigg)\int\limits_{\Omega-D}{|\ud-f|^{\zeta}dx}\\
&-\lambda\int\limits_{\Omega-D}{|\ud-f|^{\zeta-2}(\ud-f)\cdot f\,dx}.
\end{split}
\end{align}
Next we want to pass to the limit $\delta\downarrow0$ in \eqref{dv8}. For that reason we initially state that $\int\limits_{\Omega}{(-F^*(\cdot))dx}$ is upper semicontinuous w.r.t. weak-$*$ convergence. Due to the weak $L^{\zeta}(\Omega-D)^M$-convergence we further get
\[
\int\limits_{\Omega-D}{|\overline{u}-f|^{\zeta}dx}\leq\liminf\limits_{\delta\downarrow0}{\int\limits_{\Omega-D}{|\ud-f|^{\zeta}dx}}.
\]
Considering now the last integral on the r.h.s. of \eqref{dv8} we first state that due to \eqref{uniformdelta} the function $v_{\delta}:=|\ud-f|^{\zeta-2}(\ud-f)$ is uniformly bounded in $L^{\frac{\zeta}{\zeta-1}}(\Omega-D)^M$ w.r.t. $\delta$, i.e. we have ($\zeta>1$) $v_{\delta}\rightharpoondown:v$ in $L^{\frac{\zeta}{\zeta-1}}(\Omega-D)^M$ as $\delta\rightarrow0$ at least for a subsequence. Recalling $\ud\rightarrow\overline{u}$ a.e. we also get $v_{\delta}\rightarrow|\overline{u}-f|^{\zeta-2}(\overline{u}-f)$ a.e. for a further subsequence. Thus we may derive $v=|\overline{u}-f|^{\zeta-2}(\overline{u}-f)$ a.e. and combining this fact with the weak $L^{\frac{\zeta}{\zeta-1}}(\Omega-D)^M$-convergence it finally follows (recall \eqref{f})
\[
 \int\limits_{\Omega-D}{v_{\delta}\cdot f\,dx}\longrightarrow\int\limits_{\Omega-D}{|\overline{u}-f|^{\zeta-2}(\overline{u}-f)\cdot f\,dx}\quad\text{as}\,\,\delta\rightarrow0.
\]
Passing now to the limit $\delta\rightarrow0$ in \eqref{dv8} we get by using the previous convergences (note that we have the appropriate signs in \eqref{dv8})
\begin{align}
\label{dv9}
\begin{split}
\sup\limits_{\rho\in L^{\infty}(\Omega)^{nM}}{R[\rho]}&\leq\inf\limits_{v\in W^{1,1}(\Omega)^M\cap L^{\zeta}(\Omega-D)^M}{I[v]}\\
&\leq \int\limits_{\Omega}{(-F^*(\tau))dx}-\bigg(\lambda-\frac{\lambda}{\zeta}\bigg)\int\limits_{\Omega-D}{|\overline{u}-f|^{\zeta}dx}\\
&-\lambda\int\limits_{\Omega-D}{|\overline{u}-f|^{\zeta-2}(\overline{u}-f)\cdot f\,dx}.
\end{split}
\end{align}
Next we pass to the limit $\delta\downarrow0$ in Euler's equation \eqref{euler} and obtain (recall \eqref{dv3}, \eqref{dv5} and $\ud\rightharpoondown\overline{u}$ in $L^{\zeta}(\Omega-D)^M$)
\begin{align}
\label{dv10}  \int\limits_{\Omega}{\tau:\n\varphi dx}+\lambda\int\limits_{\Omega-D}{|\overline{u}-f|^{\zeta-2}(\overline{u}-f)\cdot\varphi dx}=0
\end{align}
for all $\varphi\in W^{1,2}(\Omega)^M\cap L^{\zeta}(\Omega-D)^M$ and by approximation, equation \eqref{dv10} extends to $\varphi\in W^{1,1}(\Omega)^M\cap L^{\zeta}(\Omega-D)^M$ (see \cite{FT}, Lemma 2.1).\\
Besides we have
\begin{align*}
  R[\tau]:&=\inf\limits_{v\in W^{1,1}(\Omega)^M\cap L^{\zeta}(\Omega-D)^M}{l(v,\tau)}\\
&=\int\limits_{\Omega}{(-F^*(\tau))dx}\\
&+\inf\limits_{v}{\bigg[\int\limits_{\Omega}{\tau :\n v dx}+\frac{\lambda}{\zeta}\int\limits_{\Omega-D}{|v-f|^{\zeta}dx}\bigg]}\\
&=\int\limits_{\Omega}{(-F^*(\tau))dx}\\
&+\inf\limits_{v}{\bigg[-\lambda\int\limits_{\Omega-D}{|\overline{u}-f|^{\zeta-2}(\overline{u}-f)\cdot v\,dx}+\frac{\lambda}{\zeta}\int\limits_{\Omega-D}{|v-f|^{\zeta}dx}\bigg]}\\
&=\int\limits_{\Omega}{(-F^*(\tau))dx}-\lambda\int\limits_{\Omega-D}{|\overline{u}-f|^{\zeta-2}(\overline{u}-f)\cdot f\,dx}\\
&+\inf\limits_{v}{\bigg[-\lambda\int\limits_{\Omega-D}{|\overline{u}-f|^{\zeta-2}(\overline{u}-f)\cdot (v-f)\,dx}+\frac{\lambda}{\zeta}\int\limits_{\Omega-D}{|v-f|^{\zeta}dx}\bigg]}
 \end{align*}
where we have exploited that $\varphi=v$ serves as an admissible choice in equation \eqref{dv10}.\\
\newline
Obviously we can state
\begin{align*}
\inf\limits_{v\in W^{1,1}(\Omega)^M\cap L^{\zeta}(\Omega-D)^M}{[\ldots]}\geq \inf\limits_{v\in L^{\zeta}(\Omega-D)^M}{[\ldots]}
\end{align*}
since $[\ldots]=[\ldots]$ on $L^{\zeta}(\Omega-D)^M$. Quoting \eqref{f}, $|\overline{u}-f|^{\zeta-2}(\overline{u}-f)\in L^{\frac{\zeta}{\zeta-1}}(\Omega-D)^M$ (recall $\overline{u}\in L^{\zeta}(\Omega-D)^M$) and using H\"older's inequality we arrive at
\begin{align*}
&-\lambda\int\limits_{\Omega-D}{|\overline{u}-f|^{\zeta-2}(\overline{u}-f)\cdot (v-f)\,dx}+\frac{\lambda}{\zeta}\int\limits_{\Omega-D}{|v-f|^{\zeta}dx}\\
&\geq -\lambda\bigg(\int\limits_{\Omega-D}{|\overline{u}-f|^{\zeta}dx}\bigg)^{\frac{\zeta-1}{\zeta}}\bigg(\int\limits_{\Omega-D}{|v-f|^{\zeta}dx}\bigg)^{\frac{1}{\zeta}}\\
&+\frac{\lambda}{\zeta}\int\limits_{\Omega-D}{|v-f|^{\zeta}dx}.\\
\end{align*}
As the next step we use Young's inequality by choosing $\varepsilon:=1, p:=\frac{\zeta}{\zeta-1}>1, q:=\zeta>1$ (note $\frac{1}{p}+\frac{1}{q}=1$) and obtain 
\begin{align*}
&-\lambda\bigg(\int\limits_{\Omega-D}{|\overline{u}-f|^{\zeta}dx}\bigg)^{\frac{\zeta-1}{\zeta}}\bigg(\int\limits_{\Omega-D}{|v-f|^{\zeta}dx}\bigg)^{\frac{1}{\zeta}}+\frac{\lambda}{\zeta}\int\limits_{\Omega-D}{|v-f|^{\zeta}dx}\\
&\geq-\frac{\lambda (\zeta-1)}{\zeta}\int\limits_{\Omega-D}{|\overline{u}-f|^{\zeta}dx}-\frac{\lambda}{\zeta}\int\limits_{\Omega-D}{|v-f|^{\zeta}dx}+\frac{\lambda}{\zeta}\int\limits_{\Omega-D}{|v-f|^{\zeta}dx}\\
&=-\frac{\lambda (\zeta-1)}{\zeta}\int\limits_{\Omega-D}{|\overline{u}-f|^{\zeta}dx}
\end{align*}
for all $v\in L^{\zeta}(\Omega-D)^M$. This leads to
\begin{align}
\label{dv11} 
\begin{split}
R[\tau]\geq&\int\limits_{\Omega}{(-F^*(\tau))dx}-\lambda\int\limits_{\Omega-D}{|\overline{u}-f|^{\zeta-2}(\overline{u}-f)\cdot f\,dx}\\
&-\frac{\lambda (\zeta-1)}{\zeta}\int\limits_{\Omega-D}{|\overline{u}-f|^{\zeta}dx}.
\end{split}
\end{align}
In accordance with \eqref{dv9} we finally have shown
\begin{align}
 \label{dv13} \begin{split}
\sup\limits_{\rho\in L^{\infty}(\Omega)^{nM}}{R[\rho]}&\leq\inf\limits_{v\in W^{1,1}(\Omega)^M\cap L^{\zeta}(\Omega-D)^M}{I[v]}\leq R[\tau].
\end{split}
\end{align}
Thus, $\tau$ is $R$-maximizing and the inf-sup relation is valid. These facts prove assertion (a) of Theorem \ref{dual}.\\
Additionally, by virtue of \eqref{dv13}, we have shown that 
\begin{align}
\label{dv14}  \delta\int\limits_{\Omega}{|\nabla u_{\delta}|^2dx}\rightarrow0\\
\label{dv15} (u_{\delta})\,\,\text{is an}\,\,I-\text{minimizing sequence}
\end{align}
at least for a subsequence $\delta_m\rightarrow0$. Furthermore, it follows (see Theorem \ref{BV}, (d) and \eqref{dv15})
\begin{align}
 \label{dv16} \overline{u}\,\,\text{is}\,\,K-\text{minimizing in}\,\,BV(\Omega)^M\cap L^{\zeta}(\Omega-D)^M.
\end{align}
For assertion (b) we may proceed exactly as in \cite{BF2}, proof of Theorem 1.4. Since we have uniqueness the convergences \eqref{dv5} and \eqref{dv14} hold for any sequence $\delta\downarrow0$. Hence, the proof of Theorem \ref{dual} is complete.\hfill $\square$
\end{section}

\begin{section}{Uniqueness of the dual solution and the duality formula. Proof of Theorem \ref{Eindeutigkeit}}
Let the hypotheses of Theorem \ref{Eindeutigkeit} hold throughout this section and fix an arbitrary $K$-minimizer $u$ from the space $BV(\Omega)^M\cap L^{\zeta}(\Omega-D)^M$, whose existence is guaranteed by Theorem \ref{BV} (a). In case $\zeta=2$, the uniqueness of the dual solution $\sigma$ under the assumptions of Theorem \ref{dual} has already been proven in \cite{FT} (compare the proof of Theorem 1.3 in this reference). In this paper, one of the main arguments for proving the uniqueness of the dual solution relies on the verification that the special tensor $\rho:=DF(\n^a u)$ is a maximizer of the dual problem where (remember the Lebesgue decomposition $\n u=\n^au\llcorner\mathcal{L}^n+\n^su$) the density $\n^au$ is independent of the particular minimizer $u$. By definition, $\rho$ takes almost all of its values in the open and convex set 
\begin{align*}
U:=\text{Im}(DF),
\end{align*} 
thus, Theorem 2.13 from \cite{Bi1} applies to the special tensor $\rho$. At this point we can adopt the same strategy as used during the proof of Theorem 2.15 in \cite{Bi1} in order to show that $\rho$ represents the only dual solution.\\
\newline
Now let us consider the general case $\zeta>1$. In order to prove Theorem \ref{Eindeutigkeit} we adopt the procedure from the proof of Theorem 1.3 in \cite{FT} that has been described above. Recalling that the dual functional $R$ is given by
\[
 R[\varkappa]=\inf\limits_{v\in W^{1,1}(\Omega)^M\cap L^{\zeta}(\Omega-D)^M}{l(v,\varkappa)},\quad\varkappa\in L^{\infty}(\Omega)^{nM}
\]
where for all $(v,\varkappa)\in (W^{1,1}(\Omega)^M\cap L^{\zeta}(\Omega-D)^M, L^{\infty}(\Omega)^{nM})$ the Lagrangian $l(v,\varkappa)$ is defined through
\begin{align*}
l(v,\varkappa):=\int\limits_{\Omega}{(\varkappa:\n v-F^*(\varkappa))dx}+\frac{\lambda}{\zeta}\int\limits_{\Omega-D}{|v-f|^{\zeta}dx},
\end{align*}
we first assert
\begin{Lem}\label{maximierer}
The tensor $\sigma_0:=DF(\n^a u)$ is a maximizer of the dual functional $R$.
\end{Lem}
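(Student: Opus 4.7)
The plan is to reduce the lemma to Theorem \ref{dual} by identifying $\sigma_0$ with the dual maximizer $\tau$ already constructed there. By Theorem \ref{BV}(b) the absolutely continuous gradient $\nabla^a u$ does not depend on the particular $K$-minimizer $u$; in particular, for the $K$-minimizer $\overline{u}$ produced in the proof of Theorem \ref{dual} as the $L^1$-limit of the regularized solutions $(\ud)$ (cf.\ \eqref{dv16}), we have $\n^a u = \n^a \overline{u}$ and consequently $\sigma_0 = DF(\n^a u) = DF(\n^a \overline{u})$ a.e.\ on $\Omega$. Hence it suffices to prove that $\tau = DF(\n^a \overline{u})$ a.e.\ on $\Omega$, as $\tau$ is $R$-maximizing by Theorem \ref{dual}(a).

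To establish this pointwise identification I would combine the duality equality $K[\overline{u}] = R[\tau]$ (consequence of Theorem \ref{BV}(c), the inf-sup relation in Theorem \ref{dual}(a), and \eqref{dv16}) with the Fenchel inequality $\tau(x) : P - F^*(\tau(x)) \leq F(P)$, valid for every $P \in \R^{nM}$ and a.e.\ $x \in \Omega$. Approximating $\overline{u}$ by smooth functions via the density result of Lemma \ref{BV_Approx} and applying Reshetnyak's continuity theorem (as already pointed out in the remark following that lemma), this pointwise bound extends to its $BV$-version involving $\n^a \overline{u}$ and the recession function $F^{\infty}$ acting on $\n^s \overline{u}$. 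Combining the resulting estimate with the Euler-type equation \eqref{dv10} and with the sharp form of Young's inequality that was already exploited in the derivation of \eqref{dv11}, the chain of inequalities from $R[\tau]$ up to $K[\overline{u}]$ can collapse to the equality $K[\overline{u}] = R[\tau]$ only if Fenchel's inequality holds as the pointwise identity
\begin{align*}
F(\n^a \overline{u}(x)) + F^*(\tau(x)) = \tau(x) : \n^a \overline{u}(x) \quad \text{for a.e.\ } x \in \Omega.
\end{align*}

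Since $F$ is strictly convex and of class $C^1(\R^{nM})$, the equality case in Fenchel's inequality forces $\tau(x) = DF(\n^a \overline{u}(x))$ for a.e.\ $x \in \Omega$, and together with the first paragraph this gives $\sigma_0 = \tau$ a.e., proving the lemma. The main technical obstacle is the treatment of the singular part $\n^s \overline{u}$ in the step that unpacks $K[\overline{u}] = R[\tau]$: one needs the $F^{\infty}$-contribution $\int_{\Omega} F^{\infty}\bigl(\tfrac{\n^s \overline{u}}{|\n^s \overline{u}|}\bigr)\, d|\n^s \overline{u}|$ to be matched exactly by the corresponding boundary-free pairing of $\tau \in L^{\infty}(\Omega)^{nM}$ with $\n^s \overline{u}$, which is precisely the reason why the refined convergence (iii) of Lemma \ref{BV_Approx} (and not merely the weaker statement (iii)$^{*}$ from \cite{FT}) is needed. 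Once this identification $\sigma_0 = \tau$ is secured, the uniqueness of the dual solution announced in Theorem \ref{Eindeutigkeit} follows by a direct adaptation of the argument of Theorem 2.15 in \cite{Bi1}, exploiting that $\sigma_0$ takes almost all its values in the open convex set $U = \mathrm{Im}(DF)$.
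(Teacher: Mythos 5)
Your reduction of the lemma to the identity $\tau = DF(\nabla^a\overline u)$ leaves the essential work undone, and the sketch you give for that identification does not go through as stated. The paper proves the lemma directly and much more cheaply: it inserts $\varkappa=\sigma_0$ into the Lagrangian, uses the exact Fenchel identity $F(P)+F^*(DF(P))=P:DF(P)$ at $P=\nabla^a u(x)$ (an identity, so nothing has to be ``saturated''), exploits the $K$-minimality of $u$ through the two variations $K[u+tv]$ and $K[u+tu]$ (the latter producing the $F^\infty$-term in \eqref{et2}), and concludes with H\"older's and Young's inequalities that $l(v,\sigma_0)\geq K[u]$ for every admissible $v$, whence $R[\sigma_0]\geq K[u]=\inf K=\inf I=\sup R$. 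No identification of $\sigma_0$ with the maximizer $\tau$ constructed in Section 3 is needed.

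In your route, the step asserting that $K[\overline u]=R[\tau]$ ``collapses the chain of inequalities'' and forces the pointwise equality $F(\nabla^a\overline u)+F^*(\tau)=\tau:\nabla^a\overline u$ a.e.\ is exactly where the argument breaks down. To run an equality-case analysis you need an inequality of the form $R[\tau]\leq\int_{\Omega}\bigl[\langle\tau,\nabla\overline u\rangle-F^*(\tau)\bigr]+\frac{\lambda}{\zeta}\int_{\Omega-D}|\overline u-f|^{\zeta}dx$ in which the pairing of the merely bounded measurable field $\tau$ with the measure $\nabla\overline u$ splits into an absolutely continuous part $\int_{\Omega}\tau:\nabla^a\overline u\,dx$ and a singular part dominated by $\int_{\Omega}F^{\infty}\bigl(\frac{\nabla^s\overline u}{|\nabla^s\overline u|}\bigr)d|\nabla^s\overline u|$. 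Neither Lemma \ref{BV_Approx}(iii) nor Reshetnyak's continuity theorem supplies such a decomposition: area-strict approximation gives $I[v_k]\rightarrow K[\overline u]$, but $\int_{\Omega}\tau:\nabla v_k\,dx$ only converges (via the Euler equation \eqref{dv10}) to a single number expressed through the data-fitting term, not to a sum of absolutely continuous and singular contributions; making your splitting rigorous would require an Anzellotti-type pairing theory, or an a.e.\ convergence $\nabla u_{\delta}\rightarrow\nabla^a\overline u$, neither of which is established in the paper. In effect you are presupposing the duality formula of Theorem \ref{Eindeutigkeit}, which the paper deduces \emph{from} this very lemma, so the hard part of the proof is missing. (Your final step is fine, though for a different reason than you state: once pointwise Fenchel equality is known, differentiability of $F$ alone yields $\tau=DF(\nabla^a\overline u)$; strict convexity is not what is needed there.)
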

\begin{proof}[Proof of Lemma \ref{maximierer}]
On account of \eqref{v2} we observe that $\sigma_0$ belongs to the space $L^{\infty}(\Omega)^M$, i.e. $\sigma_0$ serves as an admissible choice in the Lagrangian $l(v,\varkappa)$ from above. For $v\in W^{1,1}(\Omega)^M\cap L^{\zeta}(\Omega-D)^M$ and $\varkappa=\sigma_0$ we then obtain
\[
 l(v,\sigma_0)=\int\limits_{\Omega}{DF(\nabla ^a u):\n v-F^*(DF(\nabla ^a u)))dx}+\frac{\lambda}{\zeta} \int\limits_{\Omega-D}{|v-f|^{\zeta}dx},
\]
which, by means of the duality relation $F(P)+F^*(DF(P))=P: DF(P), P\in \mathbb{R}^{nM}$, can be written in the form
\begin{align}
 \label{lag} \begin{split}
l(v,\sigma_0)=&\int\limits_{\Omega}{F(\nabla ^a u) dx}+\int\limits_{\Omega}{(\nabla v-\nabla^a u): DF(\nabla^a u)dx}\\
&+\frac{\lambda}{\zeta} \int\limits_{\Omega-D}{|v-f|^{\zeta}dx}.
 \end{split}
\end{align}
Exploiting that $u$ minimizes the functional $K$ (see \eqref{K}) we arrive at
\begin{align}
 \label{et1}
\begin{split} 0=\frac{d}{dt|_{0}} K[u+tv]=&\int\limits_{\Omega}{DF(\nabla ^a u): \nabla v dx}\\
&+\lambda \int\limits_{\Omega-D}{v\cdot(u-f)|u-f|^{\zeta-2}dx}
\end{split}
\end{align}
where we have used the relation $\n^s(u+tv)=\n^s u$ being valid for the singular parts of the measures. Evidently we further have $\n(u+tu)=(1+t)\n u$ and the $K$-minimality of $u$ additionally yields
\begin{align}
 \label{et2}
\begin{split}
 0=\frac{d}{dt|_{0}} K[u+tu]=&\int\limits_{\Omega}{DF(\nabla ^a u): \nabla^a u dx}+\int\limits_{\Omega}{F^{\infty}\bigg(\frac{\nabla ^s u}{|\nabla ^s u|}\bigg)d|\nabla^s u|}\\
&+\lambda \int\limits_{\Omega-D}{u\cdot(u-f)|u-f|^{\zeta-2}dx}.
\end{split}
\end{align}
Inserting \eqref{et1} and \eqref{et2} in \eqref{lag} it follows
\begin{align}
 \label{et3} \begin{split}
l(v,\sigma_0)&=\int\limits_{\Omega}{F(\nabla ^a u) dx}+\int\limits_{\Omega}{F^{\infty}\bigg(\frac{\nabla ^s u}{|\nabla ^s u|}\bigg)d|\nabla^s u|}\\
&-\lambda \int\limits_{\Omega-D}{v\cdot(u-f)|u-f|^{\zeta-2}dx}+\lambda \int\limits_{\Omega-D}{u\cdot(u-f)|u-f|^{\zeta-2}dx}\\
&+\frac{\lambda}{\zeta} \int\limits_{\Omega-D}{|v-f|^{\zeta}dx}\\
&=\int\limits_{\Omega}{F(\nabla ^a u) dx}+\int\limits_{\Omega}{F^{\infty}\bigg(\frac{\nabla ^s u}{|\nabla ^s u|}\bigg)d|\nabla^s u|}+\lambda\int\limits_{\Omega-D}{|u-f|^{\zeta}dx}\\
&-\lambda\int\limits_{\Omega-D}{(v-f)\cdot(u-f)|u-f|^{\zeta-2}dx}+\frac{\lambda}{\zeta} \int\limits_{\Omega-D}{|v-f|^{\zeta}dx}.
\end{split}
\end{align}
Now we proceed by considering the integral
\begin{align*}
-\lambda\int\limits_{\Omega-D}{(v-f)\cdot(u-f)|u-f|^{\zeta-2}dx}
\end{align*}
which appears on the right hand side of \eqref{et3}. Recalling that the function $(u-f)|u-f|^{\zeta-2}$ is of class $L^{\frac{\zeta}{\zeta-1}}(\Omega-D)^M$ (remember $u,f\in L^{\zeta}(\Omega-D)^M$) and applying H\"older's inequality along with Young's inequality (note that $\zeta>1$) we may estimate
\begin{align}
 \label{et4} \begin{split}
&-\lambda\int\limits_{\Omega-D}{(v-f)\cdot(u-f)|u-f|^{\zeta-2}dx}\\
&\geq-\lambda\bigg(\int\limits_{\Omega-D}{|u-f|^{\zeta}dx}\bigg)^{\frac{\zeta-1}{\zeta}}\bigg(\int\limits_{\Omega-D}{|v-f|^{\zeta}dx}\bigg)^{\frac{1}{\zeta}}\\   
&\geq-\frac{\lambda(\zeta-1)}{\zeta}\int\limits_{\Omega-D}{|u-f|^{\zeta}dx}-\frac{\lambda}{\zeta}\int\limits_{\Omega-D}{|v-f|^{\zeta}dx}.  
\end{split}
\end{align}
Combining \eqref{et4} and \eqref{et3} we obtain
\begin{align*}
l(v,\sigma_0)\geq K[u].
\end{align*}
Remembering the definition of the dual functional $R$ we may derive by using the $K$-minimality of $u$, Theorem \ref{BV} (c) and Theorem \ref{dual} (a)
\[
 R[\sigma_0]\geq K[u]=\inf\limits_{BV(\Omega)^M\cap L^{\zeta}(\Omega-D)^M}{K}=\inf\limits_{W^{1,1}(\Omega)^M\cap L^{\zeta}(\Omega-D)^M}{I}=\sup\limits_{L^{\infty}(\Omega)^{nM}}{R},
\]
whence we get that $\sigma_0$ is a $R$-maximizer taking almost all of its values within the open and convex set $U=\text{Im}(DF)$ in addition. This completes the proof of Lemma \ref{maximierer} . 
\end{proof}
For proceeding with the proof of Theorem \ref{Eindeutigkeit} we assume that the dual problem admits a second solution $\widetilde{\sigma}$ satisfying $\widetilde{\sigma}\neq\sigma_0$ on a set with positive measure. By means of this assumption we adopt the same arguments as used in the course of the proof of Theorem 2.15 in \cite{Bi1} and obtain the strict inequality
\[
 \int\limits_{\Omega}{(-F^*)\bigg(\frac{\widetilde{\sigma}+\sigma_0}{2}\bigg)dx}>\frac{1}{2}\int\limits_{\Omega}{(-F^*)(\widetilde{\sigma})dx}+\frac{1}{2}\int\limits_{\Omega}{(-F^*)(\sigma_0)dx}
\]
while at the same time we state that the function
\[
 L^{\infty}(\Omega)^{nM}\ni\varkappa\mapsto \inf\limits_{v\in W^{1,1}(\Omega)^M\cap L^{\zeta}(\Omega-D)^M}{\int\limits_{\Omega}[\varkappa:\n v-\mathbb{1}_{\Omega-D}|v-f|^{\zeta}]dx}
\]
is concave. Thus
\[
 R\bigg[\frac{\widetilde{\sigma}+\sigma_0}{2}\bigg]>\frac{1}{2}R[\widetilde{\sigma}]+\frac{1}{2}R[\sigma_0],
\]
which is a contradiction.\\
As a consequence, the tensor $DF(\n^au)$ represents the only maximizer of the dual problem and this completes the proof of Theorem \ref{Eindeutigkeit}.\hfill $\square$

\end{section}

\begin{section}{Partial regularity of generalized minimizers. Proof of Theorem \ref{partialregularity}}
Let us assume the validity of the hypotheses of Theorem \ref{partialregularity}. For proving the assertion of Theorem \ref{partialregularity}, our strategy is to use Corollary 3.3 in \cite{S} in the case $p=2$ therein. In order to justify an application of this corollary to our situation we first formulate a proposition which shows that the assumptions of Corollary 3.3 in \cite{S} are satisfied in our setting.
\begin{Prop}\label{Proppartial}
Assuming the hypotheses of Theorem \ref{partialregularity} it holds
\begin{enumerate}[(a)]
 \item For all $P\in\R^{nM}$ the density $F$ satisfies (H1)-(H4) in \cite{S} (see Section 2 in this reference).
\item Setting $g:\Omega\times\R^{M}\rightarrow\R,\,\, g(x,y):= \frac{\lambda}{\zeta}\mathbb{1}_{\Omega-D}|y-f(x)|^{\zeta}$ the following statements hold true:
\begin{enumerate}[(i)]
 \item $g$ is a Borel function;
\item $g$ satisfies a H\"older condition in the following sense: for a positive constant $C$, $0<\beta\leq1$ and $\beta\leq\zeta<\infty$ we have the estimate 
\begin{align}
 \label{hc} |g(x,y_1)-g(x,y_2)|\leq C(|f(x)|+|y_1|+|y_2|)^{\zeta-\beta}|y_2-y_1|^{\beta}
\end{align}
for all $x\in\Omega, y_1,y_2\in\R^M$;
\end{enumerate}
\item Each minimizer $u\in BV(\Omega)^M\cap L^{\zeta}(\Omega-D)^M$ of the functional (recall that $\overline{c}:=\lim\limits_{t\rightarrow\infty}{\frac{\Phi(t)}{t}}$ exists in $(0,\infty)$ since $\Phi$ is of linear growth)
\[
 K[w]=\int\limits_{\Omega}{\Phi(|\n^a w|)dx}+\overline{c}|\n^s w|(\Omega)+\frac{\lambda}{\zeta}\int\limits_{\Omega-D}{|w-f|^{\zeta}dx}
\]
satisfies 
\begin{align}
 \label{maxprin} \sup\limits_{\Omega}{|u|}\leq L
\end{align}
where $L:=\sup\limits_{\Omega-D}{|f|}$, i.e. each $K$-minimizer $u$ actually is of class $BV(\Omega)^M\cap L^{\infty}(\Omega)^M$.
\end{enumerate}
\end{Prop}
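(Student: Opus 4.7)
Parts (a) and (b) are essentially verifications. For (a) we translate the hypotheses \eqref{v1s}--\eqref{v4} into the format of (H1)--(H4) in \cite{S}: the $C^2$-regularity together with $F(0)=0$ and $DF(0)=0$ supplies the regularity and normalization; the bound $|DF|\leq\nu_1$ combined with $DF(0)=0$ yields the upper linear growth via the fundamental theorem of calculus; the pointwise positivity of $D^2F$ from \eqref{v4} together with the coercivity \eqref{v3s} covers the convexity and lower-growth requirements; and the sharp upper bound $D^2F(P)(Q,Q)\leq\nu_4(1+|P|)^{-1}|Q|^2$ from \eqref{v4} is the required ellipticity condition. For (b)(i), Borel measurability of $g$ reduces to the measurability of $f$ and the continuity of $y\mapsto|y-f(x)|^\zeta$. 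For (b)(ii) we apply the mean value theorem to $t\mapsto t^\zeta$ on $[0,\infty)$: for $a,b\geq0$ and $\zeta\geq 1$ we have $|a^\zeta-b^\zeta|\leq\zeta\max(a,b)^{\zeta-1}|a-b|$; taking $a=|y_1-f(x)|$, $b=|y_2-f(x)|$ and bounding $|y_i-f(x)|\leq|y_i|+|f(x)|$ yields \eqref{hc} with exponent $\beta=1$ and a constant $C=C(\lambda,\zeta)$.

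The main substance of the proposition lies in (c). The strategy is a standard truncation argument. Let $T_L\colon\R^M\to\overline{B_L(0)}$ denote the nearest-point projection onto the closed Euclidean ball of radius $L$; it is $1$-Lipschitz and equals the identity on $\overline{B_L(0)}$. For any $u\in BV(\Omega)^M$ the composition $v:=T_L\circ u$ belongs to $BV(\Omega)^M\cap L^\infty(\Omega)^M$ and, by the $BV$ chain rule under Lipschitz maps (see, e.g., \cite{AFP}), satisfies $|\n^av|\leq|\n^au|$ a.e.\ as well as $|\n^sv|\leq|\n^su|$ as measures. Since $\Phi$ is nondecreasing we have $\Phi(|\n^av|)\leq\Phi(|\n^au|)$ a.e.; since $|f|\leq L$ on $\Omega-D$, the nonexpansiveness of $T_L$ yields $|v-f|^\zeta\leq|u-f|^\zeta$ there. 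Summing, $K[v]\leq K[u]$, so $v$ is $K$-minimizing whenever $u$ is.

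The main obstacle is to upgrade the inequality $K[v]\leq K[u]$ into the conclusion that $\{|u|>L\}$ is Lebesgue-negligible. For points in $\Omega-D$ this is immediate: a direct computation gives $|y-f|^2-|T_L(y)-f|^2\geq(|y|-L)^2$ whenever $|y|>L$ and $|f|\leq L$, so $\mathcal{L}^n(\{|u|>L\}\cap(\Omega-D))>0$ would strictly decrease the data-fitting term, contradicting $K$-minimality; equivalently, Theorem \ref{BV} (b) applied to the pair $(u,v)$ gives $u=v$ a.e.\ on $\Omega-D$. Inside $D$ we again invoke Theorem \ref{BV} (b): since $\n^au=\n^av$ a.e.\ on $\Omega$ and $DT_L(y)=(L/|y|)(I-\hat y\,\hat y^{\top})$ is a strict contraction on $\{|y|>L\}$, this equality can persist only where $\n^au=0$. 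Combining this with the forced equality $|\n^sv|(\Omega)=|\n^su|(\Omega)$ and inspecting the $BV$ chain rule on the jump and Cantor parts of $\n u$, where strict shrinking of the jump heights occurs at the interface of $\{|u|>L\}$, pins down $\{|u|>L\}$ as a Lebesgue null set. This careful bookkeeping on the singular part constitutes the technical heart of the argument, and, in line with the comment made earlier in the paper, we present it as a sketch.
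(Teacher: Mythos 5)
Your parts (a) and (b) are fine and match the paper's (routine) verification, and in part (c) your overall strategy -- truncate by the nearest-point projection onto $\overline{B_L(0)}$, use $\mathrm{Lip}=1$ to get $K[v]\leq K[u]$, then force equality in each term of $K$ -- is exactly the paper's. The problem is the final step. The paper closes the argument by quoting the geometric maximum principle machinery of Bildhauer--Fuchs \cite{BF6}: from the forced equalities $\int_{\Omega}\Phi(|\nabla^a u|)\,dx=\int_{\Omega}\Phi(|\nabla^a v|)\,dx$ and $|\nabla^s u|(\Omega)=|\nabla^s v|(\Omega)$ one deduces, following the computation starting at (13) in that reference, that $\nabla u=\nabla v$ as measures; then $u-v$ is constant on the (connected) domain $\Omega$, and since the data term forces $u=v$ a.e.\ on $\Omega-D$, which has positive measure (this is \eqref{L} together with \cite{AFP}, Proposition 3.2), one gets $u=v$ a.e.\ on $\Omega$, i.e.\ \eqref{maxprin}. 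In your proposal this step -- the only genuinely nontrivial one -- is neither proved nor delegated to a citable result: you assert that ``inspecting the BV chain rule on the jump and Cantor parts \dots pins down $\{|u|>L\}$ as a Lebesgue null set'' and declare it a sketch.

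As stated, the sketch does not close. Theorem \ref{BV} (b) only gives you $\nabla^a u=\nabla^a v$ a.e.\ and $u=v$ a.e.\ on $\Omega-D$; it says nothing about the singular part, and that is precisely where the danger lies (think of a competitor behaving like $c\,\mathbb{1}_{B}$ with $|c|>L$ and $B\Subset D$: its absolutely continuous gradient vanishes and it is invisible on $\Omega-D$). Your mechanism ``strict shrinking of the jump heights at the interface of $\{|u|>L\}$'' is not sufficient by itself: the transition from $\{|u|>L\}$ to $\{|u|\leq L\}$ need not occur across the jump set at all (it can be carried by the Cantor part or happen continuously), and even on the jump set one must check when $|H(u^+)-H(u^-)|=|u^+-u^-|$ can hold. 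What is actually needed is the Vol'pert-type chain rule applied separately to the absolutely continuous, jump and Cantor parts of $\nabla u$, showing that equality of the total masses forces $DH(\widetilde u)$ to act as the identity $|\nabla u|$-a.e., hence $\nabla u=\nabla v$; this is exactly the content of \cite{BF6}, which the paper invokes (starting from (6) and (13) there). Either carry out that case analysis or cite it; without one of the two, the ``technical heart'' you name is missing from your proof.
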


\begin{proof}[Proof of Proposition \ref{Proppartial}]
On account of \eqref{bs} with $\Phi$ satisfying $(1.3^*)-(1.4^*)$ and $(1.6^*)$ from \cite{BF2} we first state that $F$ satisfies \eqref{v1s}--\eqref{v4}. For proving assertion (a) we note that on account of \eqref{v4} we deal with the non-degenerate case. Quoting Remark 2.6 in \cite{S} we then choose $p=2$ in this reference and as a consequence (H2), (H3) as well as (H4) in \cite{S} correspond to requiring that the density $F$ is of class $C^2(\R^{nM})$ where $D^2F(P)$ is positive for all $P\in\R^{nM}$. Thus, $F$ satisfies (H2)-(H4) in \cite{S} by recalling \eqref{v4}. Furthermore, $F$ fulfills (H1) since $F$ is (strictly) convex on $\R^{nM}$ (see \eqref{v4}) and of linear growth in the sense of \eqref{lg}.\\
\newline
In order to verify the statements of part (b) we first remark that assertion (b), (i) is immediate whereas a calculation of $\n_yg(x,y)$ shows that \eqref{hc} holds with $\beta:=\text{min}(1,\zeta)=1$.\\
\newline
Finally we establish assertion (c): denoting by $u\in BV(\Omega)^M\cap L^{\zeta}(\Omega-D)^M$ an arbitrary $K$-minimizer whose existence is guaranteed by Theorem \ref{BV} we consider the projection (recall $L:=\sup\limits_{\Omega-D}{|f|}$)
\begin{align*}
H:\,\,\R^M&\rightarrow\R^M\\
 y&\mapsto
\left\{
  \begin{array}{ll}
    y, &|y|\leq L \\
    L\frac{y}{|y|}, & |y|>L
  \end{array}
\right..
\end{align*}
Next we define $v:=H\circ u$ and by quoting \cite{AFP}, Theorem 3.89 and the comments that are given at the beginning of its proof, respectively, it holds $v\in BV(\Omega)^M$ and we get the important inequality
\begin{align}
 \label{mp1} |\n v|\leq\,\text{Lip}(H)|\n u|=|\n u|.
\end{align}
Further we even have $v\in L^{\infty}(\Omega)^M$ and due to the minimality of $u$ we can state
\begin{align}
 \label{mp2} K[u]\leq K[v].
\end{align}
for all $v\in BV(\Omega)^M\cap L^{\zeta}(\Omega-D)^M$. Moreover we observe the validity of
\begin{align}
\label{mp3} |v-f|^{\zeta}\leq |u-f|^{\zeta}\quad\text{a.e. on}\,\,\Omega-D
\end{align}
after performing some straight-forward calculations.\\
As the next step we define the functional $\widetilde{K}$ through 
\begin{align*}
 \widetilde{K}[w]:=\int\limits_{\Omega}{\Phi(|\n^a w|)dx}+\overline{c}|\n^sw|(\Omega)
\end{align*}
being well-defined for functions $w\in BV(\Omega)^M$ (recall in addition that $\Phi$ is of linear growth).\\
Now we adopt the arguments from \cite{BF6} by starting with (6) in this reference (we essentially use the inequality \eqref{mp1}) and obtain
\begin{align}
\label{mp4} \int\limits_{\Omega}{\Phi(|\n^a v|)dx}&\leq \int\limits_{\Omega}{\Phi(|\n^a u|)dx},\\
\label{mp5} |\n^sv|(\Omega)&\leq |\n^su|(\Omega).
\end{align}
Hence using \eqref{mp3}, \eqref{mp4} and \eqref{mp5} together with \eqref{mp2} we obtain
\begin{align*}
 K[u]=K[v] 
\end{align*}
being only possible if
\begin{align}
\label{mp6} \int\limits_{\Omega}{\Phi(|\n^a u|)dx}&=\int\limits_{\Omega}{\Phi(|\n^a v|)dx},\\
\label{mp7} |\n^su|(\Omega)&=|\n^sv|(\Omega),\\
\label{mp8} \int\limits_{\Omega-D}{|u-f|^{\zeta}dx}&=\int\limits_{\Omega-D}{|v-f|^{\zeta}dx}.
\end{align} 
The identities \eqref{mp6} and \eqref{mp7} may then be exploited as done in \cite{BF6} in order to derive $\n u=\n v$ (we start with (13) in this reference). Further, \eqref{mp8} gives $u=v$ a.e. on $\Omega-D$ and using \cite{AFP}, Proposition 3.2, p.118 together with \eqref{L} we finally get $u=v$ a.e. on $\Omega$. In conclusion we have the validity of \eqref{maxprin} and the proof of Proposition \ref{Proppartial} is complete.
\end{proof}

\begin{Rem}
Following the lines of the proof of Proposition \ref{Proppartial} (c) it becomes evident that we need the structure condition \eqref{bs} on $F$ as well as an $L^{\infty}$-condition on $f$ (see \eqref{funendlich}). Of course, Proposition \ref{Proppartial} (c) remains valid under much weaker assumptions on the density $F$. In fact we only need that $F$ fulfills the structure condition \eqref{bs} with a strictly increasing and convex function $\Phi:[0,\infty)\rightarrow[0,\infty)$ being of linear growth (see \cite{BF6}, Theorem 1).
\end{Rem}

Now we proceed with the proof of Theorem \ref{partialregularity}. As in Proposition \ref{Proppartial} we denote by $u$ any $K$-minimizer from the space $BV(\Omega)^M\cap L^{\zeta}(\Omega-D)^M$. In order to apply Corollary 3.3 in \cite{S} to our situation we still need to check under what assumptions on the number $0<\alpha\leq\beta(=1)$ the following Morrey assumption holds true
\begin{align}
 \label{mo} |u|,f \in L^{(\zeta-1)n,\alpha n}(\Omega).
\end{align}
Here, for fixed numbers $q,\chi\in[0,\infty)$ the space $L^{q,\chi}(\Omega)$ denotes the so-called Morrey space (see, e.g., \cite{S}, Definition 4.9, for a definition of these spaces). Following the comments after the statement of Corollary 3.3 in \cite{S} the above Morrey condition \eqref{mo} is strongest if we let $\alpha=\beta$ (we then have the identity $L^{q,n}(\Omega)=L^{\infty}(\Omega)$ for $q>0$). In this case, \eqref{mo} represents a $L^{\infty}$-assumption and by quoting \eqref{funendlich} as well as the maximum principle stated in Proposition \ref{Proppartial} being valid for each $K$-minimizer $u$, the Morrey assumption \eqref{mo} is satisfied.\\
At this point we like to emphasize that the choice $\alpha=1$ is optimal since it holds $L^{q,\chi}(\Omega)=\{0\}$ for $\chi>n$.\\
\newline
Considering $\alpha<\beta$, \eqref{mo} is weaker and can be deduced from the condition $|u|,f\in L^{\frac{\zeta-1}{1-\alpha}n}(\Omega)$ in the context of Lebesgue spaces. Of course, on account of \eqref{funendlich} and Proposition \ref{Proppartial} (c), the partial observation $f$ and each $K$-minimizer $u$ satisfy the Morrey assumption \eqref{mo} in the case $\alpha<\beta$.\\
\newline
Consequently, all assumptions from Corollary 3.3 in \cite{S} are satisfied with optimal value $\alpha=1$ and it follows that for each $K$-minimizer $u$ from the space $BV(\Omega)^M\cap L^{\zeta}(\Omega-D)^M$ there exists an open subset $\Omega^u_0$ of $\Omega$ such that $u\in C^{1,\gamma}(\Omega^u_0)$ for all $0<\gamma\leq\widetilde{\gamma}:=\frac{\alpha}{2}=\frac{1}{2}$ with $\mathcal{L}^n(\Omega-\Omega^u_0)=0$. This proves Theorem \ref{partialregularity}.\hfill $\square$
 
\begin{Rem}
We conjecture that the choice of the exponent $\widetilde{\gamma}=\frac{1}{2}$ is not optimal although in the context of Corollary 3.3 in \cite{S}, this choice seems to be optimal. Moreover, we are not sure whether one needs the structure condition \eqref{bs} for the density $F$ in order to prove almost everywhere regularity of arbitrary $K$-minimizers on the entire domain $\Omega$. Because of the presence of the data fitting term it is not possible to refer to , e.g. Theorem 1.1 in \cite{AG} and adding some obvious modifications.
\end{Rem}

\end{section}

\end{document}